\theoremstyle{plain}
\newtheorem{theo}{Theorem}
\newtheorem{lemma}[theo]{Lemma}
\newtheorem{prop}[theo]{Proposition}
\newtheorem{cor}[theo]{Corollary}
\theoremstyle{remark}
\newtheorem{remark}[theo]{Remark}
\DeclareMathOperator{\FF}{\mathbb F}
\DeclareMathOperator{\Ab}{\mathbb A}
\def\Z{{\mathbb Z}}
\def\OO{{\mathcal O}}
\def\mm{{\mathfrak m}}
\def\Div{{\rm Div}}
\def\Pic{{\rm Pic}}
\def\Nm{{\rm Nm}}
\def\Hom{{\rm Hom}}
\def\Ext{{\rm Ext}}
\def\Fr{{\rm Fr}}
\title{Orthogonal to principal ideles}
\author{Sergey Gorchinskiy}
\address{Steklov Mathematical Institute of Russian Academy of Sciences, Moscow, Russia}
\address{National Research University Higher School of Economics, Moscow, Russia}
\email{gorchins@mi.ras.ru}
\date{}
\begin{document}

\maketitle

\begin{abstract}
We describe the orthogonal to the group of principal ideles with respect to the global tame symbol pairing on the group of ideles of a smooth projective algebraic curve over a field.
\end{abstract}

\section{Introduction}

Harmonic analysis on the group of adeles $\Ab_X$ of a smooth projective algebraic curve $X$ over a finite field plays a fundamental role in the Tate--Iwasawa method in the study of the zeta-function of $X$. A crucial fact here is that the field of rational functions $K$ on $X$, being a subgroup in $\Ab_X$, coincides with its own orthogonal $K^{\bot}$ with respect to a natural pairing on $\Ab_X$. Parshin~\cite{Par1},~\cite{Par2} found a version of the Tate--Iwasawa method based on the harmonic analysis on the group of ideles~$\Ab_X^*$. Thus a natural problem is to investigate a multiplicative analogue of the self-orthogonality of $K$, that is, to describe the orthogonal~$K^{*\,\bot}$ to the subgroup $K^*\subset \Ab^*_X$ with respect to the global tame symbol pairing
$$
(\cdot,\cdot)_X\;:\;\Ab_X^*\times \Ab_X^*\longrightarrow k^*\,,\qquad (f,g)_X=\mbox{$\prod\limits_{x\in X}\Nm_{k(x)/k}(f_x,g_x)_x$}\,,
$$
where $(\cdot,\cdot)_x$ are the local tame symbol pairings (or Hilbert symbols).

This question was treated recently by Mu\~{n}oz Porras, Navas Vicente, Pablos Romo, Plaza Mart\'in~\cite[Theor.\,5.5]{MP}. They considered the cases when $X$ is defined either over a finite field, or over the field of complex numbers. In the latter case, they were using analytic considerations with $\sigma$-functions and prime forms on Riemann surfaces.

In this note, using algebraic methods, we describe the orthogonal $K^{*\,\bot}$ when $X$ is defined over an arbitrary ground field $k$. The kernel $U$ of the global tame symbol pairing has an explicit description, see Lemma~\ref{lem:Uinfinite}, Lemma~\ref{lem:Ufinite}, and is clearly contained in $K^{*\,\bot}$. Thus, in order to describe $K^{*\,\bot}$ it is enough to describe the quotient $K^{*\,\bot}/(K^*\cdot U)$ and this is what we do, see~Theorem~\ref{theo:infinite}, Remark~\ref{rmk:maingener}, and Theorem~\ref{theo:finite}. In particular, when $k$ is algebraically closed, or, more generally, when the group $k^*$ is divisible, we show that there is an exact sequence
$$
0\longrightarrow  \Hom\big(\Pic^0(X),k^*\big)\longrightarrow K^{\,*\bot}/(K^*\cdot U)\longrightarrow\Pic^0(X)\longrightarrow 0\,.
$$

When $k$ is finite, we prove the equality ${K^{\,*\bot}=K^*\cdot U}$ (note that this differs from the description of $K^{*\,\bot}$ given in~\cite{MP}). Of course, one can show this fact using class field theory, but we take an opposite way. Namely, we prove this fact using non-degeneracy of the Weil pairing and a relation between the Weil pairing and the tame symbol, see Proposition~\ref{prop:Weiltame}. Then we deduce that there is a natural isomorphism between $C_X/(q-1)$, where $C_X=\Ab^*_X/K^*$, and the universal abelian $(q-1)$-torsion quotient of the Galois group of~$K$, see~Corollary~\ref{cor:CFT}. Note that this statement implies the second fundamental inequality for Kummer extensions in the function field case. We believe that we provide thus a more clear proof of this important step in the construction of the class field theory than the previously known sequence of tricks, see, e.g., the book of Artin and Tate~\cite[\S\,VI.2]{AT}.

The author is very grateful to D.\,V.\,Osipov and A.\,N.\,Parshin for many useful suggestions. The author is partially supported by Laboratory of Mirror Symmetry
NRU HSE, RF Government grant, ag. no. 14.641.31.0001

\section{Orthogonal to an isotropic subgroup}\label{sec:abstr}

Let~$A$ and~$N$ be abelian groups and let
$$
(\cdot,\cdot)\::\;A\times A\longrightarrow N
$$
be a bilinear pairing. For simplicity, we assume that the pairing is either symmetric or antisymmetric to avoid the difference between left and right orthogonals. However, Proposition~\ref{prop:key} below is valid for an arbitrary pairing as well.

For a subgroup $E\subset A$, denote by $E^{\bot}\subset A$ the orthogonal to $E$ in $A$ with respect to the pairing $(\cdot,\cdot)$. Let $B,C\subset A$ be subgroups which are isotropic with respect to the pairing~$(\cdot,\cdot)$, that is, ${B\subset B^{\bot}}$ and ${C\subset C^{\bot}}$. Put
$$
A'=(B\cap C)^{\bot}\subset A\,.
$$
Clearly, we have $B,C,B^{\bot},C^{\bot}\subset A'$. Our aim is to describe the quotient $B^{\bot}/B$ in terms of~$A'/(B+C)$.

Given abelian groups $H\subset G$ and an element $g\in G$, we usually denote by $[g]$ the class of $g$ in the quotient $G/H$ when it is clear from the context which subgroup $H$ in $G$ is considered. The pairing $(\cdot,\cdot)$ defines naturally the maps
$$
\alpha\;:\;C\longrightarrow \Hom(A/C,N)\,,\qquad \beta\;:\;B\cap C\longrightarrow \Hom(A/A',N)\,,
$$
both given by the formula $c\longmapsto \big([a] \mapsto (a,c)\big)$, where $a\in A$, $c\in C$.

\begin{prop}\label{prop:key}
Suppose that the following conditions are satisfied:
\begin{itemize}
\item[(i)]
the map $\alpha$ is an isomorphism;
\item[(ii)]
the natural map $\Hom\big(A/(B+C),N\big)\to \Hom\big(A'/(B+C),N\big)$ is surjective.
\end{itemize}
Then there is a decreasing filtration $B^{\bot}/B=F^0\supset F^1\supset F^2\supset F^3=0$ with the following adjoint quotients:
$$
F^0/F^1\simeq {\rm Im}\big(B^{\bot}\to A'/(B+C)\big)\,,\qquad F^1/F^2\simeq \Hom\big(A'/(B+C),N\big)\,,\qquad F^2\simeq {\rm Coker}(\beta)\,.
$$
\end{prop}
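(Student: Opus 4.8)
The plan is to construct the filtration explicitly and to invoke hypotheses (i) and (ii) only at the two places where they are genuinely needed. The starting observation is that since $B\cap C\subset B$ and $B\cap C\subset C$, both $B^{\bot}$ and $C^{\bot}$ lie in $A'=(B\cap C)^{\bot}$; combined with isotropy this gives $B\subset B^{\bot}\subset A'$ and $B+C\subset A'$, so every group below is a subquotient of $A'$ and the map $B^{\bot}\to A'/(B+C)$ makes sense. I would then define the filtration on $B^{\bot}/B$ by letting $F^{1}$ be the kernel of the induced map $B^{\bot}/B\to A'/(B+C)$; the identification $F^{0}/F^{1}\simeq {\rm Im}\big(B^{\bot}\to A'/(B+C)\big)$ is then immediate from this definition.

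Next I would compute $F^{1}$. An element of $B^{\bot}\cap(B+C)$ has the form $b+c$ with $b\in B$, $c\in C$; since $B$ is isotropic, $b\in B^{\bot}$, so $c=(b+c)-b\in B^{\bot}$, whence $B^{\bot}\cap(B+C)=B+(C\cap B^{\bot})$. As $B\cap(C\cap B^{\bot})=B\cap C$ (because $B\cap C\subset B\subset B^{\bot}$), the second isomorphism theorem gives
$$
F^{1}=\big(B+(C\cap B^{\bot})\big)/B\;\simeq\;(C\cap B^{\bot})/(B\cap C)\,.
$$
Now hypothesis (i) enters: pulling a homomorphism $A/(B+C)\to N$ back along $A/C\twoheadrightarrow A/(B+C)$ and using that $\alpha$ is an isomorphism, one checks that $\alpha$ restricts to an isomorphism $C\cap B^{\bot}\xrightarrow{\ \sim\ }\Hom\big(A/(B+C),N\big)$, the point being that $c\in C$ lies in $B^{\bot}$ exactly when $\alpha(c)$ kills the image of $B$ in $A/C$. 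Under this isomorphism $F^{1}\simeq \Hom\big(A/(B+C),N\big)/\alpha(B\cap C)$.

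To split off the remaining two pieces I would use the short exact sequence $0\to A'/(B+C)\to A/(B+C)\to A/A'\to 0$. Applying $\Hom(-,N)$ gives a left exact sequence whose last map is surjective precisely by hypothesis (ii), so
$$
0\to \Hom(A/A',N)\to \Hom\big(A/(B+C),N\big)\to \Hom\big(A'/(B+C),N\big)\to 0\,.
$$
I would set $F^{2}$ to be the subgroup of $F^{1}$ corresponding, under the isomorphism above, to $\Hom(A/A',N)/\alpha(B\cap C)$; concretely $F^{2}=(B+D)/B$ with $D=\alpha^{-1}\big(\Hom(A/A',N)\big)\subset C\cap B^{\bot}$, and this contains $B\cap C$ since for $c\in B\cap C$ and $a\in A'=(B\cap C)^{\bot}$ one has $(a,c)=0$, so $\alpha(c)$ already factors through $A/A'$. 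The displayed sequence then yields $F^{1}/F^{2}\simeq\Hom\big(A'/(B+C),N\big)$, while $F^{2}\simeq \Hom(A/A',N)/\alpha(B\cap C)$.

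Finally I would identify $F^{2}$ with ${\rm Coker}(\beta)$ by checking that for $c\in B\cap C$ the map $\beta(c)\colon A/A'\to N$ is exactly the factorization of $\alpha(c)$ through $A/A'$; hence $\beta(B\cap C)=\alpha(B\cap C)$ inside $\Hom(A/A',N)$ and $F^{2}\simeq\Hom(A/A',N)/\beta(B\cap C)={\rm Coker}(\beta)$. The routine parts are the three second-isomorphism-theorem manipulations; the step I expect to require the most care is the bookkeeping that makes the two $\Hom$-identifications compatible, namely verifying that the subgroup $D$ defined via $\alpha$ and hypothesis (ii) really lifts to a subgroup of $B^{\bot}/B$ squeezed between $B$ and $B^{\bot}\cap(B+C)$, so that $F^{2}\subset F^{1}\subset F^{0}$ genuinely as subquotients of $B^{\bot}/B$ and all three graded quotients come out as claimed.
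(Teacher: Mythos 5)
Your proof is correct and follows essentially the same route as the paper: your $F^1=\ker\big(B^{\bot}/B\to A'/(B+C)\big)$ and $F^2=(B+D)/B$ with $D=\alpha^{-1}\big(\Hom(A/A',N)\big)$ coincide with the paper's ${\rm Im}(B^{\bot}\cap C\to B^{\bot}/B)$ and ${\rm Im}(A'^{\,\bot}\cap C\to B^{\bot}/B)$, via the same identity $B+(B^{\bot}\cap C)=B^{\bot}\cap(B+C)$ and the same $\alpha$-identifications $B^{\bot}\cap C\simeq\Hom\big(A/(B+C),N\big)$, $A'^{\,\bot}\cap C\simeq\Hom(A/A',N)$, with hypothesis (ii) entering only for $F^1/F^2$ exactly as in the paper. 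The only (harmless) cosmetic difference is that you define the filtration steps as kernels and preimages under $\alpha$ rather than as images of $B^{\bot}\cap C$ and $A'^{\,\bot}\cap C$.
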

\begin{proof}
Since $B\subset A'$, there are embeddings
$$
B^{\bot}\supset A'^{\,\bot}\,,\qquad B^{\bot}\cap C\supset A'^{\,\bot}\cap C\supset B\cap C\,.
$$
Define the filtration as follows:
$$
F^1={\rm Im}(B^{\bot}\cap C\to B^{\bot}/B)\simeq (B^{\bot}\cap C)/(B\cap C)\,,
$$
$$
F^2={\rm Im}(A'^{\,\bot}\cap C\to B^{\bot}/B)\simeq (A'^{\,\bot}\cap C)/(B\cap C)\,.
$$
Let us describe the adjoint quotients. It follows from condition~(i) that there are isomorphisms
\begin{equation}\label{eq:auxilisom}
B^{\bot}\cap C\simeq \Hom\big(A/(B+C),N\big)\,,\qquad A'^{\,\bot}\cap C\simeq \Hom(A/A',N)\,.
\end{equation}
The second isomorphism in~\eqref{eq:auxilisom} implies that ${F^2\simeq {\rm Coker}(\beta)}$. Both isomorphisms in~\eqref{eq:auxilisom} together with condition~(ii) imply that the quotient ${F^1/F^2\simeq (B^{\bot}\cap C)/(A'^{\,\bot}\cap C)}$ is isomorphic to ${\Hom\big(A'/(B+C),N\big)}$.

It follows from the embedding $B\subset B^{\bot}$ that there is an equality
$$
B+(B^{\bot}\cap C)=B^{\bot}\cap (B+C)\,.
$$
Hence the quotient~$F^0/F^1$ is isomorphic to $B^{\bot}/\big(B^{\bot}\cap (B+C)\big)$, which is also isomorphic to image of the natural map $B^{\bot}\to A'/(B+C)$.
\end{proof}

Actually, condition~(i) of Proposition~\ref{prop:key} implies that the map $\beta$ is injective.

Now we give two corollaries of Proposition~\ref{prop:key}, which will be useful for the applications to the tame symbol pairing. Let us say that a bilinear pairing between abelian groups ${G\times H\to N}$ is unimodular if it induces isomorphisms ${G\simeq \Hom(H,N)}$ and ${H\simeq \Hom(G,N)}$.

\begin{cor}\label{cor:key}
Suppose that the following conditions are satisfied:
\begin{itemize}
\item[(i)]
the map $\alpha$ is an isomorphism;
\item[(ii)]
the natural pairing
$$
(\cdot,\cdot)\;:\;(B\cap C)\times A/(B+C)\longrightarrow N
$$
is unimodular.
\end{itemize}
Then there is an equality $B=B^{\bot}$.
\end{cor}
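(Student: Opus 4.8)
The plan is to deduce the equality $B=B^{\bot}$ from Proposition~\ref{prop:key} by showing that all three adjoint quotients of the filtration $B^{\bot}/B=F^0\supset F^1\supset F^2\supset F^3=0$ vanish. Since $B$ is isotropic we already have $B\subset B^{\bot}$, so it suffices to prove $B^{\bot}/B=0$. Condition~(i) of Proposition~\ref{prop:key} is assumed, so the only preliminary point is to check its condition~(ii); I claim that this, together with the vanishing of the graded pieces, will follow at once from a single structural fact, namely the equality $A'=B+C$. Establishing this equality is the heart of the argument.

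To prove $A'=B+C$, recall that $B,C\subset A'$ gives the inclusion $B+C\subset A'$, so it remains to show $A'\subset B+C$. Here I would use the second half of the unimodularity hypothesis of the corollary, which asserts that the induced map $A/(B+C)\to \Hom(B\cap C,N)$, sending $[a]$ to the homomorphism $c\mapsto (a,c)$, is an isomorphism; in particular it is injective. Now take any $a\in A'=(B\cap C)^{\bot}$. By definition $(a,c)=0$ for every $c\in B\cap C$, so the class $[a]\in A/(B+C)$ maps to the zero homomorphism. Injectivity then forces $[a]=0$, i.e.\ $a\in B+C$, which yields $A'\subset B+C$ and hence $A'=B+C$.

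With $A'=B+C$ in hand the remaining steps are formal. First, $A'/(B+C)=0$, so the map in condition~(ii) of Proposition~\ref{prop:key} is a map into $\Hom(0,N)=0$ and is trivially surjective; thus the proposition applies and the filtration exists. Its top two quotients vanish immediately: $F^0/F^1\simeq {\rm Im}\big(B^{\bot}\to A'/(B+C)\big)=0$ and $F^1/F^2\simeq \Hom\big(A'/(B+C),N\big)=\Hom(0,N)=0$. For the bottom quotient $F^2\simeq {\rm Coker}(\beta)$, note that since $A'=B+C$ the map $\beta\colon B\cap C\to \Hom(A/A',N)=\Hom\big(A/(B+C),N\big)$, $c\mapsto \big([a]\mapsto(a,c)\big)$, is exactly the isomorphism provided by the first half of the unimodularity of the pairing $(B\cap C)\times A/(B+C)\to N$. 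Hence $\beta$ is surjective and $F^2={\rm Coker}(\beta)=0$. Combining the three vanishings gives $B^{\bot}/B=0$, that is $B=B^{\bot}$.

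I expect the main obstacle to be the equality $A'=B+C$; once the unimodular pairing collapses $A'$ onto $B+C$, every hypothesis of Proposition~\ref{prop:key} and every adjoint quotient degenerates formally. The one point demanding care is the identification of $\beta$ with the unimodularity isomorphism $B\cap C\simeq \Hom\big(A/(B+C),N\big)$, which requires both the equality $A'=B+C$ (so that the targets agree) and the bookkeeping that $c\mapsto\big([a]\mapsto(a,c)\big)$ is indeed the map furnished by the pairing, up to the harmless sign coming from (anti)symmetry.
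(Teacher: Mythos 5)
Your proof is correct and follows essentially the same route as the paper's: both deduce $A'=B+C$ from the injectivity half of unimodularity, identify $\beta$ with the other unimodularity isomorphism, and conclude via Proposition~\ref{prop:key} that all three graded pieces of $B^{\bot}/B$ vanish. Your write-up merely spells out the details (well-definedness of the induced maps, the sign remark) that the paper leaves implicit.
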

\begin{proof}
The isomorphism ${A/(B+C)\simeq \Hom(B\cap C,N)}$ implies that $A'=B+C$, that is, $A'/(B+C)=0$. Together with the isomorphism ${B\cap C\simeq \Hom\big(A/(B+C),N\big)}$ this implies that $\beta$ is an isomorphism. Thus by Proposition~\ref{prop:key}, we have $F^0/F^1=F^1/F^2=F^2=0$, whence $B=B^{\bot}$.
\end{proof}

Let us introduce more notation. We have an exact sequence
\begin{equation}\label{eq:secondexabs}
0\longrightarrow B/(B\cap C)\longrightarrow  A'/C\longrightarrow  A'/(B+C)\longrightarrow 0\,.
\end{equation}
Define a map
$$
\gamma\;:\; A'/(B+C)\longrightarrow\Ext^1\big(A'/(B+C),N\big)
$$
as follows. The class $[a]\in A'/(B+C)$ of an element $a\in A'$ is sent by $\gamma$ to the class of the extension of $A'/(B+C)$ by $N$ obtained as the push-out of extension~\eqref{eq:secondexabs} along the homomorphism
$$
\lambda_a\;:\; B/(B\cap C)\longrightarrow N\,,\qquad [b]\longmapsto (a,b)\,,
$$
where $b\in B$. The homomorphism $\lambda_a$ is well-defined, because $a\in A'$, so that $(a,B\cap C)=0$. The class of the obtained extension does not depend on the choice of $a\in A'$ with fixed $[a]\in A'/(B+C)$, because for any $b\in B$, the homomorphism $\lambda_b$ is trivial and for any $c\in C$, the homomorphism $\lambda_c$ extends to a well-defined homomorphism ${(c,\cdot)\colon A'/C\to N}$.

\begin{cor}\label{cor:split}
Suppose that the following conditions are satisfied:
\begin{itemize}
\item[(i)]
the map $\alpha$ is an isomorphism;
\item[(ii)]
the quotient $A/A'$ split out of $A$.
\end{itemize}
Then there is a decreasing filtration $B^{\bot}/B=F^0\supset F^1\supset F^2\supset F^3=0$ with the following adjoint quotients:
$$
F^0/F^1\simeq {\rm Ker}(\gamma)\,,\qquad F^1/F^2\simeq \Hom\big(A'/(B+C),N\big)\,,\qquad F^2\simeq {\rm Coker}(\beta)\,.
$$
\end{cor}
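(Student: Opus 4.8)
The plan is to deduce the corollary from Proposition~\ref{prop:key}. First I would check that condition~(ii) here implies condition~(ii) there. Since $B+C\subset A'$, a splitting $A\simeq A'\oplus(A/A')$ induces a splitting $A/(B+C)\simeq A'/(B+C)\oplus(A/A')$, the group $A/A'$ being carried isomorphically onto the second summand. Restriction of homomorphisms to the first summand then shows that $\Hom\big(A/(B+C),N\big)\to\Hom\big(A'/(B+C),N\big)$ is surjective. Hence Proposition~\ref{prop:key} applies and furnishes a filtration whose quotients $F^1/F^2\simeq\Hom\big(A'/(B+C),N\big)$ and $F^2\simeq{\rm Coker}(\beta)$ are already the asserted ones. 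It remains to identify the subgroup $F^0/F^1\simeq{\rm Im}\big(B^{\bot}\to A'/(B+C)\big)$ of $A'/(B+C)$ with ${\rm Ker}(\gamma)$.

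The tool for this is the standard criterion that the push-out of extension~\eqref{eq:secondexabs} along $\lambda_a$ is trivial precisely when $\lambda_a\colon B/(B\cap C)\to N$ extends to a homomorphism $A'/C\to N$; thus $\gamma([a])=0$ if and only if $\lambda_a$ so extends. (Additivity of $\gamma$, hence the fact that ${\rm Ker}(\gamma)$ is a subgroup, follows from $\lambda_{a_1+a_2}=\lambda_{a_1}+\lambda_{a_2}$ together with the interpretation of the push-out along a sum as a Baer sum.) For one inclusion, if $[a]$ is the image of some $a'\in B^{\bot}$, then, using $B^{\bot}\subset(B\cap C)^{\bot}=A'$, I may recompute $\gamma$ with the representative $a'$; but $a'\in B^{\bot}$ gives $\lambda_{a'}=0$, which trivially extends, so $\gamma([a])=0$.

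The reverse inclusion is the main obstacle, as it is here that both hypotheses are used. Suppose $\gamma([a])=0$, so that $\lambda_a$ extends to some $\mu\colon A'/C\to N$; write $\mu'\colon A'\to N$ for its composite with $A'\to A'/C$. Using the splitting $A\simeq A'\oplus(A/A')$ I would extend $\mu'$ to $\hat\mu\colon A\to N$ by zero on the complement; since $C\subset A'$ and $\mu'$ kills $C$, the map $\hat\mu$ descends to $\bar\mu\colon A/C\to N$. Condition~(i) then yields a unique $c_0\in C$ with $\alpha(c_0)=\bar\mu$, so that $(b,c_0)=\mu([b])=(a,b)$ for every $b\in B$. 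Putting $a'=a-\sigma c_0$, where $\sigma=+1$ or $-1$ according as the pairing is symmetric or antisymmetric, one finds $(b,a')=(b,a)-\sigma(b,c_0)=\sigma(a,b)-\sigma(a,b)=0$, so $a'\in B^{\bot}$; and $a'-a\in C$ shows $[a']=[a]$ in $A'/(B+C)$. Therefore $[a]\in{\rm Im}\big(B^{\bot}\to A'/(B+C)\big)$, which completes the identification $F^0/F^1\simeq{\rm Ker}(\gamma)$ and hence the proof.
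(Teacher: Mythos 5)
Your proof is correct and takes essentially the same route as the paper's: deduce condition~(ii) of Proposition~\ref{prop:key} from the splitting, and identify ${\rm Im}\big(B^{\bot}\to A'/(B+C)\big)$ with ${\rm Ker}(\gamma)$ by using the splitting plus condition~(i) to produce an element $c_0\in C$ realizing the extension of $\lambda_a$, then correcting $a$ by $c_0$. Your explicit $\sigma=\pm 1$ bookkeeping even repairs a sign the paper glosses over in the antisymmetric case (the paper writes $\lambda_{a-c}=0$, whereas there the correct adjustment is $a+c$).
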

\begin{proof}
Let us show that the image of the natural map ${\zeta\colon B^{\bot}\to A'/(B+C)}$ coincides with the kernel of $\gamma$. It follows from the definition of $\gamma$ that ${\rm Im}(\zeta)\subset{\rm Ker}(\gamma)$.

Conversely, let $a\in A'$ be such that $\gamma[a]=0$. Then the push-out of exact sequence~\eqref{eq:secondexabs} along~$\lambda_a$ admits a splitting, or, equivalently, the map $\lambda_a\colon B/(B\cap C)\to N$ extends to a map ${\tilde{\lambda}\colon A'/C\to N}$. Condition~(ii) implies that $A/A'$ splits out of $A/C$ as well. Together with condition~(i) this implies that the natural map $C\to\Hom(A'/C,N)$ is surjective. Let $c\in C$ be sent to $\tilde{\lambda}$ under this map. Then $\lambda_{a-c}=0$, that is, $a-c\in B^{\bot}$. Since $[a]=[a-c]$ in $A'/(B+C)$, we see that $[a]$ is in the image of $\zeta$. This proves that ${\rm Im}(\zeta)={\rm Ker}(\gamma)$.

Now we conclude the proof applying Proposition~\ref{prop:key}.
\end{proof}

\section{Tame symbol pairing}

Let $X$ be a smooth projective curve over a field $k$ and let $K=k(X)$ be the field of rational functions on $X$. We suppose that $X$ is geometrically irreducible over $k$, that is, $k$ is algebraically closed in $K$, or, equivalently, $H^0(X,\OO_X)=k$.

Given a closed point $x\in X$, denote by $\widehat{\OO}_{X,x}$ the completion of the local ring $\OO_{X,x}$, by $\widehat{\mm}_x\subset \widehat{\OO}_{X,x}$ the maximal ideal, by $k(x)$ the residue field at the point~$x$, which is also the residue field of the local ring $\widehat{\OO}_{X,x}$, and denote by~$K_x$ the fraction field of $\widehat{\OO}_{X,x}$. Equivalently,~$K_x$ is the completion of the field $K$ with respect to the discrete valuation $\nu_x:K^*\to\Z$ defined by~$x$. Note that $k(x)$ is canonically a finite extension of~$k$. We have a local tame symbol pairing
$$
(\cdot,\cdot)_x\;:\;K_x^*\times K_x^*\longrightarrow k(x)^*\,,\qquad (f_x,g_x)_x= \big((-1)^{\nu_x(f_x)\nu_x(g_x)}f_x^{-\nu_x(g_x)}g_x^{\nu_x(f_x)}\big)(x)\,,
$$
where $f_x,g_x\in K_x^*$. The pairing $(\cdot,\cdot)_x$ is antisymmetric.

For each closed point $x\in X$, put $d_x=[k(x):k]$ and let $d$ be the greatest common divisor of the numbers $d_x$ over all~${x\in X}$:
\begin{equation}\label{eq:d}
d={\rm GCD}\,(\,d_x\mid x\in X)\,.
\end{equation}

Denote by $\Ab_X$ the ring of adeles of $X$ and by $\Ab^*_X$ the group of ideles of $X$, that is, the group of invertible elements in $\Ab_X$. We have a global tame symbol pairing
$$
(\cdot,\cdot)_X\;:\;\Ab_X^*\times \Ab_X^*\longrightarrow k^*\,,\qquad (f,g)_X=\mbox{$\prod\limits_{x\in X}\Nm_{k(x)/k}(f_x,g_x)_x$}\,,
$$
where $f=(f_x)_{x\in X},\,g=(g_x)_{x\in X}\in\Ab_X^*$. The pairing $(\cdot,\cdot)_X$ is antisymmetric. By the explicit formula for the local tame symbol pairing, the subgroup ${\mbox{$\prod\limits_{x\in X}\widehat\OO^*_{X,x}$}\subset \Ab^*_X}$ is isotropic. By Weil reciprocity law, the subgroup $K^*\subset \Ab^*_X$ is isotropic as well. Let $U\subset \Ab^*_X$ denote the kernel of the global tame symbol pairing.

Our aim is to describe the quotient $K^{*\,\bot}/(K^*\cdot U)$. For this, we will apply results from Section~\ref{sec:abstr} to
$$
A=\Ab^*_X/U\,,\qquad B=K^*/(K^*\cap U)\,,\qquad C={\mbox{$\prod\limits_{x\in X}\widehat\OO^*_{X,x}$}}\,/\Big(\,{\mbox{$\prod\limits_{x\in X}\widehat\OO^*_{X,x}$}}\cap U\Big)\,,\qquad N=k^*\,,
$$
and to the pairing on $A$ induced by the global tame symbol pairing $(\cdot,\cdot)_X$.

\begin{remark}\label{rmk:kerneltame}
Clearly, an idele $f=(f_x)_{x\in X}\in\Ab^*_X$ belongs to $U$ if and only if for any $x\in X$, we have $f_x\in U_x$, where $U_x\subset K^*_x$ denotes the kernel of the pairing
$$
K_x^*\times K_x^*\longrightarrow k^*\,,\qquad (f_x,g_x)\longmapsto \Nm_{k(x)/k}(f_x,g_x)_x\,.
$$
An explicit description of $U_x$ and $U$ depends on whether the field~$k$ is infinite or finite.
\end{remark}

\section{The case of an infinite ground field}

Assume that the field $k$ is infinite.

\begin{lemma}\label{lem:Uinfinite}
There are equalities
$$
U_x={\rm Ker}\big(\widehat{\OO}^*_{X,x}\to k(x)^*\to k^*\big)\,,\qquad U=\mbox{$\prod\limits_{x\in X}U_x$}\,,
$$
where $x\in X$ is any closed point, the map $\widehat{\OO}^*_{X,x}\to k(x)^*$ is the natural surjective homomorphism, and the map $k(x)^*\to k^*$ is the norm map $\Nm_{k(x)/k}$.
\end{lemma}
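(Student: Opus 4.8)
The statement about $U$ reduces immediately to the one about $U_x$. Indeed, by Remark~\ref{rmk:kerneltame} an idele $f$ lies in $U$ if and only if $f_x\in U_x$ for every closed point $x$; once I show that $U_x\subset\widehat{\OO}^*_{X,x}$, the set of such ideles is literally the direct product $\prod_x U_x$ sitting inside $\prod_x\widehat{\OO}^*_{X,x}$, which is compatible with the restricted-product structure of $\Ab^*_X$. So the whole content is the computation of the single local kernel $U_x$, and the plan is to extract it directly from the explicit formula for the tame symbol by testing against well-chosen elements $g_x$.

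First I would settle the case where $f_x$ is a unit, i.e. $\nu_x(f_x)=0$. Then the formula collapses to $(f_x,g_x)_x=\bar{f}_x^{\,-\nu_x(g_x)}$, where $\bar{f}_x=f_x(x)\in k(x)^*$, so that $\Nm_{k(x)/k}(f_x,g_x)_x=\Nm_{k(x)/k}(\bar{f}_x)^{-\nu_x(g_x)}$. Testing against a uniformizer (valuation $1$) shows that $f_x\in U_x$ forces $\Nm_{k(x)/k}(\bar{f}_x)=1$, and conversely this condition makes every symbol trivial. This identifies the unit part of $U_x$ with $\ker\big(\widehat{\OO}^*_{X,x}\to k(x)^*\xrightarrow{\Nm}k^*\big)$ and gives the easy inclusion.

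The key step is to rule out elements of nonzero valuation, and this is exactly where the hypothesis that $k$ is infinite enters. Suppose $n=\nu_x(f_x)\neq 0$ and test $f_x$ against a nonzero constant $a\in k^*$, viewed as a unit of $\widehat{\OO}_{X,x}$. The formula gives $(f_x,a)_x=a^{n}$ in $k(x)^*$, and since $a\in k$ its norm is $\Nm_{k(x)/k}(a^n)=a^{nd_x}$ with $d_x=[k(x):k]$. As $nd_x\neq 0$, the equation $a^{nd_x}=1$ has only finitely many solutions in the infinite group $k^*$, so some $a$ makes $a^{nd_x}\neq 1$; hence $f_x\notin U_x$. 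Combining the two steps yields $U_x=\ker\big(\widehat{\OO}^*_{X,x}\to k(x)^*\to k^*\big)$, and then the description of $U$ follows as above. The only real subtlety is this last finiteness-of-solutions argument, which fails precisely when $k^*$ is finite and explains why the finite-field case must be treated separately.
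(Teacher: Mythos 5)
Your proof is correct and follows exactly the route the paper intends: its own proof is the one-line assertion that the formula for $U_x$ ``follows from the explicit description of the local tame symbol pairing (here we use that $k$ is infinite)'' and that the formula for $U$ follows from Remark~\ref{rmk:kerneltame}, which is precisely what you flesh out. Your two test computations --- against a uniformizer to pin down the unit part, and against constants $a\in k^*$ (using that $a^{nd_x}=1$ has only finitely many solutions in the infinite group $k^*$) to exclude nonzero valuation --- supply exactly the details the paper leaves implicit, including the correct identification of where infiniteness of $k$ is used.
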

\begin{proof}
The formula for $U_x$ follows from the explicit description of the local tame symbol pairing (here we use that $k$ is infinite). The formula for $U$ follows from Remark~\ref{rmk:kerneltame}.
\end{proof}

Denote by $\Div(X)$ the group of divisors on $X$, by $\Div^0(X)\subset \Div(X)$ the subgroup of degree zero divisors, by $\Pic(X)$ the Picard group of $X$, and by $\Pic^0(X)\subset\Pic(X)$ the subgroup of classes of degree zero divisors. Define a map
$$
\theta\;:\;\Pic^0(X)\longrightarrow {\rm Ext}^1\big(\Pic^0(X),k^*\big)
$$
as follows. An element $\ell\in \Pic^0(X)$ is sent by $\theta$ to the class of the extension of $\Pic^0(X)$ by $k^*$ obtained as the restriction of the Poincar\'e biextension over $\Pic^0(X)\times\Pic^0(X)$ to~${\{\ell\}\times\Pic^0(X)}$.

\begin{theo}\label{theo:infinite}
Assume that the field $k$ is infinite.
\begin{itemize}
\item[(i)]
Suppose that for any finite extension of fields $k\subset l$, the norm map $\Nm_{l/k}\colon l^*\to k^*$ is surjective. Then there is a decreasing filtration
$$
K^{*\,\bot}/(K^*\cdot U)=F^0\supset F^1\supset F^2\supset F^3=0
$$
with the following adjoint quotients (see~\eqref{eq:d} for the definition of $d$):
$$
F^0/F^1\simeq {\rm Ker}(\theta)\subset \Pic^0(X)\,,\qquad F^1/F^2\simeq \Hom\big(\Pic^0(X),k^*\big)\,,\qquad F^2\simeq k^*/(k^*)^d\,.
$$
\item[(ii)]
Suppose that the group $k^*$ is divisible. Then there is an exact sequence
$$
1\longrightarrow \Hom\big(\Pic^0(X),k^*\big)\longrightarrow K^{*\,\bot}/(K^*\cdot U)\longrightarrow \Pic^0(X)\longrightarrow 0\,.
$$
\end{itemize}
\end{theo}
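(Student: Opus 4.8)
The plan is to apply Corollary~\ref{cor:split} to the pairing induced on $A=\Ab^*_X/U$ by $(\cdot,\cdot)_X$, with $B,C,N$ as fixed above, so the first task is to translate the abstract objects into geometry. By Lemma~\ref{lem:Uinfinite} we have $U=\prod_xU_x$ with $U_x\subset\widehat\OO^*_{X,x}$, hence $C\cong\prod_x\big(\widehat\OO^*_{X,x}/U_x\big)$ and $A/C\cong\Ab^*_X/\prod_x\widehat\OO^*_{X,x}\cong\Div(X)$ via $f\mapsto\sum_x\nu_x(f_x)\,x$. Pairing a unit idele $c$ against an idele with divisor $D$ gives $\prod_x\Nm_{k(x)/k}(\bar c_x)^{\nu_x(D)}$, so under $\Hom(A/C,N)=\prod_xk^*$ the map $\alpha$ is the product of the isomorphisms $\widehat\OO^*_{X,x}/U_x\xrightarrow{\sim}k^*$ supplied by the surjectivity of $\Nm_{k(x)/k}$; thus condition~(i) holds. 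A direct computation gives $(\lambda,g)_X=\lambda^{-\deg\operatorname{div}(g)}$ for a constant $\lambda\in k^*$, and since $k$ is infinite no nonzero integer annihilates $k^*$; therefore $A'=(B\cap C)^{\bot}$ consists exactly of the classes of degree-zero ideles. Consequently $A'/C\cong\Div^0(X)$, the degree gives $A/A'\cong d\Z$, the intersection $B\cap C$ is the image $k^*/\mu_d$ of the constants, $B/(B\cap C)\cong K^*/k^*$ is the group of principal divisors, and the sequence~\eqref{eq:secondexabs} becomes the defining sequence $0\to\text{(principal divisors)}\to\Div^0(X)\to\Pic^0(X)\to0$; in particular $A'/(B+C)\cong\Pic^0(X)$.

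Next I verify the remaining input of Corollary~\ref{cor:split} and compute two of the three quotients. Since $A/A'\cong d\Z$ is free abelian, the surjection $A\to A/A'$ splits, so condition~(ii) holds automatically. The quotient $F^1/F^2\cong\Hom\big(A'/(B+C),N\big)$ is then $\Hom\big(\Pic^0(X),k^*\big)$. For $F^2\cong\operatorname{Coker}(\beta)$ I compute $\beta$: a constant $\lambda$ maps to $[a]\mapsto(a,\lambda)_X=\lambda^{\deg\operatorname{div}(a)}$, and under $A/A'\cong d\Z$ together with the induced identification $\Hom(A/A',N)\cong k^*$ (evaluation on a class of degree $d$) this becomes $\lambda\mapsto\lambda^d$. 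Hence $\beta\colon k^*/\mu_d\to k^*$ is injective with image $(k^*)^d$, giving $F^2\cong k^*/(k^*)^d$.

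It remains to identify $F^0/F^1\cong\operatorname{Ker}(\gamma)$ with $\operatorname{Ker}(\theta)$, and this is the main obstacle. Under the dictionary above, $\gamma$ sends $\ell\in\Pic^0(X)$ to the push-out of $0\to\text{(principal divisors)}\to\Div^0(X)\to\Pic^0(X)\to0$ along $\lambda_a\colon\operatorname{div}(f)\mapsto(a,f)_X$, where $a$ is a degree-zero idele with $[\operatorname{div}(a)]=\ell$. Representing $\ell$ by a divisor $D_a$ and taking $a$ to be a divisor idele for $D_a$ with support disjoint from $\operatorname{div}(f)$, the local tame symbols reduce to $f(x)^{\nu_x(D_a)}$ at the points of $D_a$ and are trivial elsewhere, so $(a,f)_X=\prod_x\Nm_{k(x)/k}\big(f(x)\big)^{\nu_x(D_a)}=f(D_a)$, the Weil evaluation of $f$ on $D_a$. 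I then invoke the standard description of the Poincar\'e biextension of the Jacobian: its restriction to $\{\ell\}\times\Pic^0(X)$ is exactly the push-out of the same fundamental sequence along $\operatorname{div}(f)\mapsto f(D_a)$. This yields $\gamma=\theta$, hence $\operatorname{Ker}(\gamma)=\operatorname{Ker}(\theta)$, and Corollary~\ref{cor:split} gives part~(i). The delicate point is precisely this identification: one must match the tame-symbol push-out with the Weil-evaluation description of the biextension and verify that the antisymmetric signs $(-1)^{\nu_x(f_x)\nu_x(g_x)}$ in the local symbols produce no spurious character twisting $\theta$, which is why the disjoint-support representatives and careful sign bookkeeping are needed.

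Finally, part~(ii) follows formally from part~(i). If $k^*$ is divisible, then for a finite extension $k\subset l$ any $u\in k^*$ equals $v^{[l:k]}=\Nm_{l/k}(v)$ for a suitable $v\in k^*\subset l^*$, so the norm maps are surjective and part~(i) applies. Divisibility also gives $(k^*)^d=k^*$, so $F^2\cong k^*/(k^*)^d=0$; and since a divisible group is an injective $\Z$-module, $\Ext^1\big(\Pic^0(X),k^*\big)=0$, whence $\theta=0$ and $F^0/F^1\cong\operatorname{Ker}(\theta)=\Pic^0(X)$. The filtration therefore collapses to the short exact sequence $0\to\Hom\big(\Pic^0(X),k^*\big)\to K^{*\,\bot}/(K^*\cdot U)\to\Pic^0(X)\to0$.
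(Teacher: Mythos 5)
Your proposal follows the paper's skeleton exactly: the same application of Corollary~\ref{cor:split} with the same dictionary ($C=\prod_x k^*$ via Lemma~\ref{lem:Uinfinite} and norm surjectivity, $A/C\simeq\Div(X)$, $A'=(\Ab^*_X)^0/U$ since $(g,c)_X=c^{\deg(g)}$ and $k^*$ has no bounded exponent for infinite $k$, $A'/(B+C)\simeq\Pic^0(X)$, $\deg\colon A/A'\simeq d\,\Z$ giving the splitting, $F^1/F^2\simeq\Hom\big(\Pic^0(X),k^*\big)$, and $\beta$ becoming $c\mapsto c^d$ so that $F^2\simeq k^*/(k^*)^d$); part~(ii) is also the paper's argument verbatim (divisibility gives norm surjectivity and kills $k^*/(k^*)^d$, injectivity of $k^*$ as a $\Z$-module kills $\Ext^1$, so ${\rm Ker}(\theta)=\Pic^0(X)$). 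The single point of genuine divergence is the identification $\gamma=\theta$, which is also the heart of the proof. The paper imports \cite[Theor.\,3.1]{Gor}, where the Poincar\'e biextension is presented adelically as a quotient of the trivial biextension $k^*\times(\Ab^*_X)^0\times(\Ab^*_X)^0$ by an explicit tame-symbol action; with that avatar of the biextension, $\theta[{\rm div}(f)]$ is literally the cokernel of $\psi\mapsto\big((f,\psi)_X,{\rm div}(\psi)\big)$ and the comparison with the push-out along $\lambda_f$ is immediate (up to a harmless sign). You instead reduce $(a,f)_X$ to the Weil evaluation $f(D_a)$ for disjoint supports and invoke ``the standard description'' of the restriction of the biextension to $\{\ell\}\times\Pic^0(X)$.

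That step, as written, has a concrete soft spot which you flag but do not close: $f\mapsto f(D_a)$ is defined only on the subgroup of principal divisors disjoint from ${\rm supp}(D_a)$, so ``the push-out along ${\rm div}(f)\mapsto f(D_a)$'' is not literally well-formed, and the extension class of the push-out along $\lambda_a$ is \emph{a priori} sensitive to the values of $\lambda_a$ on all of $K^*/k^*$, not just on the disjoint-support part. The repair is standard but must be said: let $\Div^0_{D_a}$ be the degree-zero divisors disjoint from ${\rm supp}(D_a)$ and $P_{D_a}$ the principal ones among them; by the moving lemma $\Div^0_{D_a}$ still surjects onto $\Pic^0(X)$, your computation shows $\lambda_a|_{P_{D_a}}$ is $f\mapsto\prod_x\Nm_{k(x)/k}\big(f(x)\big)^{\nu_x(D_a)}$, and the induced map from the push-out of the sub-extension along this partial Weil evaluation to the push-out of the full extension~\eqref{eq:secondexabs} along $\lambda_a$ is an isomorphism of extensions of $\Pic^0(X)$ by $k^*$ (it commutes with $k^*$ and covers the identity). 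Your worry about the signs $(-1)^{\nu_x(f_x)\nu_x(g_x)}$ is immaterial: with disjoint supports these signs are $1$, and in any case the paper itself only proves $\gamma=\theta$ up to sign, which does not affect ${\rm Ker}(\theta)$. Finally, be aware that the $f(D)$-description of the Poincar\'e biextension over a general (not algebraically closed) field, with the norms built in, is essentially the content of \cite[Theor.\,3.1]{Gor} in another guise; so either you cite a precise reference for it or your route silently assumes the same input as the paper's. With that step made explicit, your proof is correct and is, in substance, the paper's proof with a classical rather than adelic presentation of the biextension.
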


The condition in Theorem~\ref{theo:infinite}(i) holds, in particular, if $k$ is quasi-algebraically closed and of zero characteristic, see, e.g., Serre's book~\cite[Prop.\,X.10, Prop.\,X.11]{Ser}. The condition in Theorem~\ref{theo:infinite}(ii) holds, in particular, if $k$ is algebraically closed.

\begin{proof}[Proof of Theorem~\ref{theo:infinite}]
(i) Let us describe in our case the groups $A$, $B$, $C$, $B\cap C$, $A'$ and the maps~$\alpha$,~$\beta$,~$\gamma$ from Section~\ref{sec:abstr}. Lemma~\ref{lem:Uinfinite} together with the surjectivity property of the norm maps imply the equalities
$$
\mbox{$C=\prod\limits_{x\in X}\widehat{\OO}^*_{X,x}/U_x=\prod\limits_{x\in X}k^*$}\,,\qquad A/C=\Div(X)\,.
$$
It follows that $B\cap C$ is the image of the map
$$
k^*\longrightarrow\mbox{$\prod\limits_{x\in X}k^*$}\,,\qquad c\longmapsto (c^{d_x})_{x\in X}\,.
$$
Note that for all $f\in \Ab^*_X$ and $c\in k^*$, we have $(f,c)_X=c^{\deg(f)}$, where
$$
\deg\;:\;\Ab_X^*\longrightarrow \Z\,,\qquad (f_x)_{x\in X}\longmapsto \mbox{$\sum\limits_{x\in X}d_x\,\nu_x(f_x)$}\,,
$$
is the degree homomorphism. Therefore, $A'=(\Ab^*_X)^0/U$, where $(\Ab^*_X)^0$ is the kernel of the degree homomorphism.
We see that $A'/(B+C)=\Pic^0(X)$.

It is easy to see that the map $\alpha$ coincides with the natural isomorphism
$$
\mbox{$\prod\limits_{x\in X}k^*$}\stackrel{\sim}\longrightarrow\Hom\big(\Div(X),k^*\big)\,.
$$
So, condition~(i) of Corollary~\ref{cor:split} is satisfied.

Further, the isomorphism $\deg\colon A/A'\stackrel{\sim}\longrightarrow d\,\Z$ implies that $A/A'$ splits out of $A$, whence condition~(ii) of Corollary~\ref{cor:split} is satisfied as well. Also, we obtain that ${\rm Coker}(\beta)$ is the cokernel of the map
$$
k^*\longrightarrow B\cap C\longrightarrow\Hom(A/A',N)\simeq k^*\,, \qquad c\longmapsto c^d\,.
$$
Therefore, ${{\rm Coker}(\beta)=k^*/(k^*)^d}$.

Finally, we show that the maps $\gamma$ and $\theta$ coincide up to sign. It is proved in~\cite[Theor.\,3.1]{Gor} that the Poincar\'e biextension over $\Pic^0(X)\times\Pic^0(X)$ is isomorphic to the quotient of the trivial biextension $k^*\times(\Ab^*_X)^0\times(\Ab^*_X)^0$ by the following action of the group $\big({K^*\times \prod\limits_{x\in X}\widehat{\OO}_{X,x}^*}\big)^{\times 2}$:
$$
\big((\varphi,u),(\psi,v)\big)\;:\;(c,f,g)\longmapsto \big(c(f,\psi)_X(u,\psi)_X(u,g)_X,f\varphi u,g\psi v\big)\,.
$$
The proof is based on the fact that the global tame symbol pairing coincides with the commutator pairing for the central extension of $\Ab^*_X$ by $k^*$ constructed by Arbarello, Kac, de Concini~\cite{AKC}. Define a map
\begin{equation}\label{eq:divadeles}
{\rm div}\;:\;\Ab^*_X\longrightarrow\Div(X)\,,\qquad (f_x)_{x\in X}\longmapsto \mbox{$\sum\limits_{x\in X}\nu_x(f_x)\cdot x$}\,.
\end{equation}
We see that for any idele $f\in(\Ab^*_X)^0$, the class $\theta[{\rm div}(f)]$ is equal to the class of the extension of $\Pic^0(X)$ by $k^*$ given by the cokernel of the homomorphism
$$
K^*\longrightarrow k^*\times \Div^0(X)\,,\qquad \psi\longmapsto \big((f,\psi)_X,{\rm div}(\psi)\big)\,.
$$
The latter extension coincides up to sign with the extension obtained as the push-out along the map $\lambda_f$ of extension~\eqref{eq:secondexabs} in our case. This proves that the maps~$\gamma$ and~$\theta$ are equal up to sign.

Now we conclude the proof applying Corollary~\ref{cor:split}.

(ii) By the assumption, the group $k^*/(k^*)^d$ is trivial. Moreover, the group~$k^*$ is injective as a $\Z$-module, whence ${\Ext^1\big(\Pic^0(X),k^*\big)=0}$ and ${{\rm Ker}(\theta)=\Pic^0(X)}$. Also, clearly, the assumption in~(i) is satisfied. Hence we conclude the proof applying~(i).
\end{proof}

\begin{remark}\label{rmk:maingener}
One can prove a generalization of Proposition~\ref{prop:key} for arbitrary isotropic subgroups ${B,C\subset A}$ without assuming conditions (i) and (ii) therein. This implies the following generalization of Theorem~\ref{theo:infinite} for an arbitrary infinite ground field $k$. For each closed point $x\in X$, let $\Gamma_x\subset k^*$ denote the image of the norm map $k(x)^*\to k^*$. Define an injective map~$\iota$ by the formula
$$
\iota\;:\;k^*\longrightarrow \mbox{$\prod\limits_{x\in X}k^*$}\,,\qquad c\longmapsto(c^{d_x/d})\,.
$$
Then there is a filtration
$$
K^{*\,\bot}/(K^*\cdot U)=F^0\supset F^1\supset F^2\supset F^3=0
$$
with the following adjoint quotients:
$$
F^0/F^1\simeq{\rm Ker}\left[{\rm Ker(\theta)}\longrightarrow {\rm Coker}\Big( \Hom\big(\Pic^0(X),k^*\big)\to \big(\mbox{$\prod\limits_{x\in X}k^*\big)/\big(\iota(k^*)\cdot\prod\limits_{x\in X}\Gamma_x\big)$} \Big)\right]\,,
$$
$$
F^1/F^2\simeq \mbox{$\prod\limits_{x\in X}\Gamma_x$}/\big(\iota(k^*)\cap \mbox{$\prod\limits_{x\in X}\Gamma_x$}\big)\cap \Hom\big(\Pic^0(X),k^*\big)\subset {\big(\mbox{$\prod\limits_{x\in X}k^*$}\big)/\iota(k^*)}\,,
$$
$$
F^2\simeq\big(\iota(k^*)\cap\mbox{$\prod\limits_{x\in X}\Gamma_x$}\big)/\iota(k^*)^d\,,
$$
If $\Gamma_x=k^*$ for all $x\in X$, this specializes to Theorem~\ref{theo:infinite}(i).
\end{remark}

\section{The case of a finite ground field}

Assume that $k=\FF_q$ is a finite field. We will use the following results.

\begin{prop}\label{prop:AT}
For any finite Galois extension of fields $K\subset L$, almost all valuations of $K$ split completely in $L$ if and only if $K=L$.
\end{prop}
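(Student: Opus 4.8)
The plan is to reformulate everything in terms of the smooth projective curves with function fields $K$ and $L$ and then to count closed points. Write $X$ for the curve with $k(X)=K$ and let $Y$ be the smooth projective model of $L$; since $K\subset L$ is finite, there is a finite morphism $\pi\colon Y\to X$ of degree $n=[L:K]$, and the valuations of $K$ are precisely the closed points of $X$ (every valuation is trivial on the finite field $k=\FF_q$). The ``if'' direction is trivial, since for $K=L$ every valuation is vacuously split completely. For the ``only if'' direction I would argue by counting closed points: I will show that the hypothesis forces the $\FF_{q^m}$-point counts of $Y$ and $X$ to be proportional with ratio $n$, and then that the known growth of these counts forces $n=1$.

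First I would extract the combinatorial consequence of the hypothesis. Let $S$ be the finite set of closed points of $X$ that do not split completely in $L$ (this includes the finitely many points of $X$ where $\pi$ ramifies). If a closed point $x\in X$ of degree $d$ splits completely, then there are exactly $n$ points of $Y$ above it, each again of residue degree $d$; conversely, a closed point $w\in Y$ of degree $d$ lies over some $x\in X$ whose degree divides $d$, and if $x\notin S$ this degree equals $d$. Since the fibres over $S$ are finite, only finitely many points of $Y$ lie above $S$, so there is a bound $D_0$ such that for every $d>D_0$ the degree-$d$ points of $Y$ are exactly the $n$-to-one images of the degree-$d$ points of $X$. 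Writing $a_d(\cdot)$ for the number of closed points of degree $d$, this gives $a_d(Y)=n\,a_d(X)$ for all $d>D_0$. Using the identity $N_m=\sum_{d\mid m} d\,a_d$, where $N_m$ denotes the number of $\FF_{q^m}$-points, I then obtain a uniform bound $|N_m(Y)-n\,N_m(X)|\le c_0$ valid for all $m$, with $c_0$ a constant coming only from the finitely many small degrees $d\le D_0$.

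The second ingredient is the elementary part of the theory of zeta functions of curves (rationality of the zeta function, due to F.~K.~Schmidt, together with the location of its dominant simple pole at $t=q^{-1}$): since $X$ is geometrically irreducible, $N_m(X)=q^m(1+o(1))\to\infty$. The main point to watch is that $Y$ need not be geometrically irreducible over $\FF_q$: its field of constants may be $\FF_{q^r}$ for some $r\ge 1$, in which case $N_m(Y)=0$ whenever $r\nmid m$. If $r>1$, choosing $m\to\infty$ with $r\nmid m$ gives $N_m(Y)=0$ while $n\,N_m(X)\to\infty$, contradicting the uniform bound; hence $r=1$ and $Y$ is geometrically irreducible over $\FF_q$. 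Then the same asymptotics apply to $Y$, namely $N_m(Y)=q^m(1+o(1))$, and combined with $N_m(Y)=n\,N_m(X)+O(1)$ this forces $q^m(1+o(1))=n\,q^m(1+o(1))$, i.e.\ $n=1$ and $K=L$.

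I expect the constant-field subtlety to be the main obstacle. A naive pole-order comparison based on $Z_Y=Z_X^{\,n}\cdot(\text{finite factor})$ near $t=1$ is inconclusive, because the exceptional factor over $S$ contributes to the pole order, so one must instead use the quantitative growth of the point counts and deal separately with the possibility that $L$ enlarges the field of constants. Everything else---the fibre combinatorics and the prime-counting identity $N_m=\sum_{d\mid m} d\,a_d$---is routine, and the only external input is the classical asymptotic $N_m\sim q^m$ for a geometrically irreducible smooth projective curve, which needs not the full Riemann hypothesis for curves but only the dominant simple pole of the zeta function.
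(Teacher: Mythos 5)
Your proof is correct, but note that the paper does not actually prove Proposition~\ref{prop:AT}: it simply cites \cite[Theor.\,V.2]{AT}, where the statement (for arbitrary global fields) is deduced from the first fundamental inequality of class field theory, which in the function field case reduces essentially to Riemann--Roch. Your argument is a genuinely different, self-contained route: the fibre combinatorics giving $a_d(Y)=n\,a_d(X)$ for all $d>D_0$, hence $|N_m(Y)-n\,N_m(X)|\le c_0$, is sound, as is the treatment of the possible constant field extension $\FF_{q^r}\subset L$; note also that you never use the Galois hypothesis (``splits completely'' makes sense for any finite extension, and your counting needs nothing more), so you in fact prove a more general statement, whereas the Artin--Tate formulation uses Galois only inessentially. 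Two remarks, one of which would streamline your proof. First, your parenthetical claim that $N_m\sim q^m$ ``needs only the dominant simple pole'' is imprecise: the $N_m$ are coefficients of the logarithmic derivative of $Z_X$, so zeros of $Z_X$ on the circle $|t|=q^{-1}$ would destroy the asymptotic; what is needed is zero-freeness on that circle, the function-field analogue of the nonvanishing of $\zeta(1+it)$ (classical and elementary, via the $3+4\cos\theta+\cos 2\theta\ge 0$ trick together with an argument excluding an inverse root at $-q$ for geometrically irreducible curves), but since you invoke $N_m\sim q^m$ as a correct classical external input this does not create a gap. Second, your dismissal of the pole-order comparison is mistaken, because you locate it at $t=1$ instead of $t=q^{-1}$: from $a_d(Y)=n\,a_d(X)$ for $d>D_0$ one gets $Z_Y(t)=Z_X(t)^n\,R(t)$ with $R(t)=\prod_{d\le D_0}(1-t^d)^{c_d}$, and all zeros and poles of $R$ lie on $|t|=1$, hence away from $t=q^{-1}$; since $Z_X$ has a pole of order exactly one at $t=q^{-1}$ (positivity of the residue via Riemann--Roch), the right-hand side has a pole of order $n$ there, while $Z_{Y/\FF_q}(t)=Z_{Y/\FF_{q^r}}(t^r)$ has a pole of order at most one at $t=q^{-1}$ whatever $r$ is. Comparing orders gives $n=1$ using only F.\,K.\,Schmidt's rationality and the simple pole, with no prime number theorem and no separate constant-field case analysis. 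Either way, both your route and the cited Artin--Tate route rest ultimately on Riemann--Roch, so there is no circularity with Corollary~\ref{cor:AT}(ii), which the paper derives from this proposition.
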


This holds more generally when $K$ is an arbitrary global field, see, e.g.,~\cite[Theor.\,V.2]{AT}. As is noticed in op.cit., this is a consequence of the first fundamental inequality in class field theory, which is essentially reduced to the Riemann--Roch theorem in the function field case.

\begin{cor}\label{cor:AT}
\hspace{0cm}
\begin{itemize}
\item[(i)]
The natural map $K^*/(K^*)^{q-1}\to \Ab_X^*/(\Ab_X^*)^{q-1}$ is injective.
\item[(ii)]
There is an equality $d=1$ (see~\eqref{eq:d} for the definition of $d$).
\end{itemize}
\end{cor}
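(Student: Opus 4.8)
The plan is to deduce both statements from Proposition~\ref{prop:AT} by producing, in each case, a finite Galois extension $K\subset L$ in which almost all---indeed all---valuations split completely, so that the proposition forces $L=K$. The role of the hypotheses (being a $(q-1)$-th power in part~(i), and $d$ dividing every $d_x$ in part~(ii)) is precisely to guarantee complete splitting at every place.

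For part~(i), I would use Kummer theory. Since $k=\FF_q$ contains the group $\mu_{q-1}$ of $(q-1)$-th roots of unity and $q-1$ is prime to the characteristic, for $f\in K^*$ the splitting field $L$ of $T^{q-1}-f$ over $K$ is a cyclic Kummer extension $L=K(\alpha)$ with $\alpha^{q-1}=f$, hence abelian and in particular Galois. The local point is that a valuation $\nu_x$ splits completely in $L$ exactly when $f\in(K_x^*)^{q-1}$: if $f_x=h^{q-1}$ with $h\in K_x^*$, then the roots $\zeta h$ (for $\zeta\in\mu_{q-1}\subset K_x$) of $T^{q-1}-f$ all lie in $K_x$, so $L\otimes_K K_x$ is a product of copies of $K_x$. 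Now an element of the kernel of $K^*/(K^*)^{q-1}\to\Ab_X^*/(\Ab_X^*)^{q-1}$ is represented by $f\in K^*$ with $f=g^{q-1}$ for some idele $g=(g_x)$; then $f_x=g_x^{q-1}\in(K_x^*)^{q-1}$ for \emph{every} $x$, so every valuation splits completely in $L$. Proposition~\ref{prop:AT} then gives $L=K$, that is, $f\in(K^*)^{q-1}$, which is the asserted injectivity.

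For part~(ii), I would instead take the constant field extension $L=K\cdot\FF_{q^d}$. Because $X$ is geometrically irreducible, $\FF_q$ is algebraically closed in $K$, so $K$ and $\FF_{q^d}$ are linearly disjoint over $\FF_q$ and $L=K\otimes_{\FF_q}\FF_{q^d}$ is a field, a cyclic (hence Galois) extension of $K$ of degree $d$. Constant extensions are unramified, and at a place $x$ with residue field $k(x)=\FF_{q^{d_x}}$ the residue field of $L$ upstairs is $\FF_{q^{\,{\rm lcm}(d,d_x)}}$, so the number of places of $L$ above $x$ equals $\gcd(d,d_x)$; thus $x$ splits completely in $L$ if and only if $d\mid d_x$. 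Since $d={\rm GCD}(d_x\mid x\in X)$ divides every $d_x$, all valuations split completely, and Proposition~\ref{prop:AT} forces $L=K$, whence $d=[L:K]=1$.

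The routine inputs are the two complete-splitting criteria, and the main point to set up carefully is exactly these: the Kummer criterion in~(i) (including separability, which holds because $q-1$ is prime to $p$) and the residue-degree computation for the unramified constant extension in~(ii) (together with the verification that $L$ is a field, which is where geometric irreducibility enters). Once these local descriptions are established, the global conclusion is immediate from Proposition~\ref{prop:AT}, and no further class field theory is required.
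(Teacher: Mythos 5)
Your proposal is correct and follows the same route as the paper: both parts are deduced from Proposition~\ref{prop:AT}, part~(i) via the Kummer extension $K(\varphi^{1/(q-1)})$ and the equivalence between $\varphi$ being a local $(q-1)$-th power at every place and complete splitting, and part~(ii) via the constant-field extension $K\otimes_{\FF_q}\FF_{q^d}$. You merely spell out the local splitting criteria (the Kummer computation and the $\gcd(d,d_x)$ count of places) that the paper's terse proof leaves implicit, and these details are all accurate.
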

\begin{proof}
(i) Given an element $\varphi\in K^*$, we have $\varphi\in(\Ab^*_X)^{q-1}$ if and only if any valuation of $K$ splits completely in $K(\varphi^{\frac{1}{q-1}})$. By Proposition~\ref{prop:AT}, the latter is equivalent to $\varphi\in (K^*)^{q-1}$.

(ii) This is proved, e.g., in~\cite[Theor.\,V.5]{AT}. Indeed, apply Proposition~\ref{prop:AT} to the Galois extension of fields $K\subset K\otimes_{\FF_q}\FF_{q^d}$.
\end{proof}

Denote by $\overline X$ the curve $X\times_k\bar{k}$ over $\bar k$ and denote by $\Fr\colon \overline X\to \overline X$ the $\bar k$-linear $q$-th Frobenius morphism. This is a purely inseperable finite morphism of degree $q$. We have a group homomorphism $\Fr_*\colon\Pic(\overline X)\to\Pic(\overline X)$.

Note that the subgroup $k^*\subset \bar k^*$ coincides with the group $\mu_{q-1}$ of degree $q-1$ roots of unity in~$\bar k^*$. Denote by
$$
(\cdot,\cdot)_{q-1}\;:\;\Pic^0(\overline X)_{q-1}\times \Pic^0(\overline X)_{q-1}\longrightarrow \mu_{q-1}=k^*
$$
the corresponding Weil pairing.

\begin{lemma}\label{lemma:nondegen}
There is a well-defined unimodular pairing
$$
\kappa\;:\;\Pic^0(X)_{q-1}\times \Pic^0(X)/(q-1)\longrightarrow k^*\,,\qquad \kappa(\ell,[m])=(\ell,\Fr_*(\widetilde m)-\widetilde m)_{q-1}\,,
$$
where for any $m\in \Pic^0(X)$, an element $\widetilde m\in \Pic^0(\overline X)$ is such that $(q-1)\widetilde m=m$.
\end{lemma}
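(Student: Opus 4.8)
The plan is to reduce unimodularity of $\kappa$ to the non-degeneracy of the Weil pairing on $\Pic^0(\overline X)_{q-1}$, via a snake-lemma identification. Write $J=\Pic^0(\overline X)$, which is a divisible group, and $\Phi=\Fr_*-1\colon J\to J$. I will quote three standard facts: $\Pic^0(X)=J(\FF_q)={\rm Ker}(\Phi)$ and, by Lang's theorem, $\Phi$ is surjective; the Weil pairing on $J[q-1]$ is perfect; and it satisfies $(\Fr_*x,\Fr_*y)_{q-1}=(x,y)_{q-1}^q=(x,y)_{q-1}$ for $x,y\in J[q-1]$, the last equality holding because the values lie in $\mu_{q-1}$. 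Set $P=\Pic^0(X)$, $V=J[q-1]$, and $\phi=\Phi|_V$; then $\Pic^0(X)_{q-1}={\rm Ker}(\phi)$ and $\Fr_*|_V$ is an automorphism of $V$.

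First I would verify that $\kappa$ is well defined. For $m\in P$ one has $\Fr_*m=m$, so $(q-1)(\Fr_*\widetilde m-\widetilde m)=\Phi(m)=0$ and $\Fr_*\widetilde m-\widetilde m\in V$, whence the pairing value makes sense. Since any $\ell\in\Pic^0(X)_{q-1}$ satisfies $\Fr_*\ell=\ell$, replacing $\widetilde m$ by $\widetilde m+t$ with $t\in V$ changes $\Fr_*\widetilde m-\widetilde m$ by $\Phi(t)$ and multiplies $\kappa(\ell,[m])$ by $(\ell,\Phi t)_{q-1}=(\ell,\Fr_*t)_{q-1}(\ell,t)_{q-1}^{-1}=(\Fr_*\ell,\Fr_*t)_{q-1}(\ell,t)_{q-1}^{-1}=1$, so the value is independent of $\widetilde m$. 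Finally, taking $\widetilde m=m_0\in P$ when $m=(q-1)m_0$ gives $\Fr_*\widetilde m-\widetilde m=0$, so $\kappa$ descends modulo $(q-1)\Pic^0(X)$; bilinearity is then clear.

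The heart of the argument is to exhibit $\kappa$ as the pairing induced by the Weil pairing. Put $K={\rm Ker}(\phi)$ and $I={\rm Im}(\phi)$. Because $\Fr_*|_V$ is an isometry of the perfect pairing on $V$, its adjoint is $\Fr_*^{-1}|_V$, so the adjoint of $\phi=\Fr_*-1$ is $\Fr_*^{-1}-1=-\Fr_*^{-1}\phi$; since $\Fr_*^{-1}$ is invertible this yields $K^\perp=I$ and $I^\perp=K$, and the Weil pairing descends to a perfect pairing $\bar e\colon K\times V/I\to\mu_{q-1}$. By construction $\kappa(\ell,[m])=\bar e(\ell,\psi[m])$, where $\psi$ sends $[m]$ to the class of $\Phi(\widetilde m)$ in $V/I$. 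This $\psi$ is precisely the connecting homomorphism produced by the snake lemma applied to the endomorphism $\Phi$ of $0\to V\to J\xrightarrow{q-1}J\to0$. As $\Phi$ is surjective with kernel $P$, the snake sequence reads $0\to K\to P\xrightarrow{q-1}P\to V/I\to0$, whose last map induces an isomorphism $\psi\colon\Pic^0(X)/(q-1)\xrightarrow{\sim}V/I$. Composing the perfect pairing $\bar e$ with the isomorphism $\psi$ shows that $\kappa$ is unimodular.

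The step needing the most care is the Frobenius bookkeeping underlying the first and third paragraphs: keeping the geometric Frobenius endomorphism $\Fr_*$ distinct from the arithmetic Frobenius, establishing $(\Fr_*x,\Fr_*y)_{q-1}=(x,y)_{q-1}$ on $(q-1)$-torsion, and matching $\psi$ with the snake connecting map so that its bijectivity drops out in one stroke. The remaining ingredients---divisibility of $J$, Lang surjectivity of $\Phi$, and perfectness of the Weil pairing---are standard and will only be quoted.
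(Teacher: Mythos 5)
Your proposal is correct and follows essentially the same route as the paper: perfectness of the Weil pairing on $\Pic^0(\overline X)_{q-1}$, the isometry property of $\Fr_*$, and Lang surjectivity of $\Fr_*-1$ to identify $\Pic^0(X)/(q-1)$ with ${\rm Coker}(\Fr_*-1)$ via $[m]\mapsto \Fr_*(\widetilde m)-\widetilde m$. The only cosmetic differences are that you obtain perfectness of the descended pairing by computing the adjoint $-\Fr_*^{-1}(\Fr_*-1)$ and the orthogonality relations ${\rm Ker}^{\bot}={\rm Im}$, where the paper instead counts orders of the finite groups ${\rm Ker}(\Fr_*-1)$ and ${\rm Coker}(\Fr_*-1)$ and invokes Pontryagin duality, and that you recognize the isomorphism onto the cokernel as a snake-lemma connecting map, which the paper leaves as a routine verification.
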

\begin{proof}
This is a rather standard fact. Namely, the Weil pairing $(\cdot,\cdot)_{q-1}$ is unimodular and the map $\Fr_*$ from $\Pic^0(\overline{X})_{q-1}$ to itself is an isometry with respect to the Weil pairing. Consider the map $\Fr_*-1$ from ${\Pic^0(\overline{X})_{q-1}}$ to itself. The Weil pairing induces a pairing
$$
{\rm Ker}(\Fr_*-1)\times {\rm Coker}(\Fr_*-1)\longrightarrow k^*
$$
such that the natural map ${{\rm Ker}(\Fr_*-1)\to{\Hom\big({\rm Coker}(\Fr_*-1),k^*\big)}}$ is injective. Since the finite groups ${\rm Ker}(\Fr_*-1)$ and ${\rm Coker}(\Fr_*-1)$ have the same orders, the above map is in fact an isomorphism. Since both groups are $(q-1)$-torsion and $k^*$ is a cyclic group of order~$q-1$, we obtain that the groups ${\rm Ker}(\Fr_*-1)$ and ${\rm Coker}(\Fr_*-1)$ are Pontryagin dual to each other. Therefore the natural map ${{\rm Coker}(\Fr_*-1)\to{\Hom\big({\rm Ker}(\Fr_*-1),k^*\big)}}$ is also an isomorphism.

Now observe that ${\rm Ker}(\Fr_*-1)=\Pic^0(X)_{q-1}$. Further, using the well-known surjectivity of the map $\Fr_*-1$ on $\Pic^0(\overline{X})$, one shows that there is an isomorphism
$$
\Pic^0(X)/(q-1)\stackrel{\sim}\longrightarrow {\rm Coker}(\Fr_*-1)\,,\qquad [m]\longmapsto \Fr_*(\widetilde m)-\widetilde m\,.
$$
Altogether this proves the lemma.
\end{proof}

The following relation between the Weil pairing and the tame symbol was first proved by Howe~\cite{How}, then by Mazo~\cite{Maz} in a more elementary way, and then in~\cite[Cor.\,4.1]{Gor} by a different method based on the relation between the Poincar\'e biextension and the tame symbol mentioned in the proof of Theorem~\ref{theo:infinite}(i).

\begin{prop}\label{prop:Weiltame}
Let $\varphi\in K^*$ be such that ${\rm div}(\varphi)\in (q-1)\Div(X)$ and let $h\in \Ab^*_X$ be such that $h^{q-1}\in K^*$. Then there is an equality for the Weil pairing (see~\eqref{eq:divadeles} for the definition of~${\rm div}$)
$$
\big([{\rm div}(\varphi)/(q-1)],[{\rm div}(h)]\big)_{q-1}=(\varphi,h)_X\,.
$$
\end{prop}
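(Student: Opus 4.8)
The plan is to compute the left hand side directly from the explicit model of the Poincar\'e biextension recalled in the proof of Theorem~\ref{theo:infinite}(i), following the method of~\cite[Cor.\,4.1]{Gor}. First I would record that both arguments are $(q-1)$-torsion classes defined over $k$: since ${\rm div}(\varphi)=(q-1)D$ for the divisor $D={\rm div}(\varphi)/(q-1)$, the class $\ell=[D]$ lies in $\Pic^0(X)_{q-1}$, and since $(q-1)\,{\rm div}(h)={\rm div}(h^{q-1})$ is principal, the class $m=[{\rm div}(h)]$ lies in $\Pic^0(X)_{q-1}$ as well. Hence it suffices to work with the $k$-rational biextension $\mathcal P=\big(k^*\times(\Ab^*_X)^0\times(\Ab^*_X)^0\big)/G^{\times 2}$, where $G=K^*\times\prod_x\widehat{\OO}^*_{X,x}$ acts by the formula displayed in that proof, and to extract the Weil pairing from it; its agreement with the geometric pairing on $\Pic^0(\overline X)_{q-1}$ follows by base change to $\bar k$.

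Next I would use the standard recipe expressing the Weil pairing through a biextension. For $\ell,m\in\Pic^0(X)_{q-1}$, choose a point $p$ of $\mathcal P$ over $(\ell,m)$, form the $(q-1)$-fold sums of $p$ with itself under the two partial group laws, which lie over $(0,m)$ and $(\ell,0)$ respectively, and read off their values $c_1,c_2\in k^*$ under the canonical trivializations of $\mathcal P$ along $\{0\}\times\Pic^0(X)$ and $\Pic^0(X)\times\{0\}$; up to a fixed normalization the Weil pairing equals $c_1/c_2$. In the model above these trivializations are given by the sections $[(1,1,g)]$ and $[(1,f,1)]$, whose well-definedness one checks using the action formula.

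To carry this out I would represent $\ell=[D]$ by an idele $f\in(\Ab^*_X)^0$ with ${\rm div}(f)=D$; then $f^{q-1}$ and $\varphi$ have the same divisor, so $w:=f^{q-1}\varphi^{-1}\in\prod_x\widehat{\OO}^*_{X,x}$ and $f^{q-1}=\varphi w$. Representing $m=[{\rm div}(h)]$ by the idele $h$ itself, put $p=[(1,f,h)]$. For the first partial law the $(q-1)$-fold sum is $[(1,f^{q-1},h)]=[(1,\varphi w,h)]$ over $(0,m)$; trivializing it by the element of $G^{\times 2}$ whose first factor acts as $f\mapsto f\varphi^{-1}w^{-1}$, the displayed action formula contributes only the factor $(w^{-1},h)_X$, so $c_1=(w,h)_X^{-1}$. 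Since $(f,h)_X\in k^*=\FF_q^*$ has order dividing $q-1$, we get $(f^{q-1},h)_X=(f,h)_X^{q-1}=1$, whence $c_1=(w,h)_X^{-1}=(\varphi,h)_X$. For the second partial law the $(q-1)$-fold sum is $[(1,f,h^{q-1})]$ over $(\ell,0)$; here the hypothesis $h^{q-1}\in K^*$ is essential, since it permits trivialization by the element of $G^{\times 2}$ acting as $g\mapsto g\,h^{-(q-1)}$ in the second factor, giving $c_2=(f,h)_X^{-(q-1)}=1$. Therefore $c_1/c_2=(\varphi,h)_X$, which is the asserted identity.

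The routine parts are the well-definedness of the two trivializing sections and the bilinearity manipulations with $(\cdot,\cdot)_X$. The main obstacle I anticipate is the bookkeeping of conventions: one must pin down the precise normalization in the recipe $e_{q-1}=c_1/c_2$ (as opposed to $c_2/c_1$, or a twist by a power of $-1$ coming from the antisymmetry of the local symbols) and check that it matches the normalization of the Weil pairing $(\cdot,\cdot)_{q-1}$ used in Lemma~\ref{lemma:nondegen}, together with the verification that extracting the pairing from the $k$-rational model $\mathcal P$ agrees, after base change, with the geometric Weil pairing on $\Pic^0(\overline X)_{q-1}$. An alternative, more computational route would combine the classical divisorial formula for the Weil pairing with Weil reciprocity for $(\cdot,\cdot)_X$, but the biextension computation above makes the roles of the hypotheses ${\rm div}(\varphi)\in(q-1)\Div(X)$ and $h^{q-1}\in K^*$ most transparent.
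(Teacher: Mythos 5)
The paper does not actually prove Proposition~\ref{prop:Weiltame}: it quotes the result from Howe~\cite{How} and Mazo~\cite{Maz}, and from \cite[Cor.\,4.1]{Gor}, and the method of that last reference --- reading the Weil pairing off the idelic model of the Poincar\'e biextension --- is exactly the one you follow. So your proposal is best viewed as a reconstruction of the proof of \cite[Cor.\,4.1]{Gor}, and the computation in it is correct. The sections $[(1,1,g)]$ and $[(1,f,1)]$ are well defined (all symbol factors in the displayed action degenerate to $1$; for the ambiguity $u=\varphi^{-1}$ with $\varphi\in K^*\cap\prod_{x\in X}\widehat{\OO}^*_{X,x}=k^*$ one also needs $\deg(g)=0$, which holds in the model over $(\Ab^*_X)^0$), the $(q-1)$-fold sums under the two partial laws are $[(1,f^{q-1},h)]$ and $[(1,f,h^{q-1})]$, and your evaluations $c_1=(w,h)_X^{-1}=(\varphi,h)_X$ (using $(f,h)_X^{q-1}=1$ in $k^*=\FF_q^*$) and $c_2=(f,h)_X^{-(q-1)}=1$ are right; the two hypotheses enter exactly where you say, and one should also note $h\in(\Ab^*_X)^0$ since $(q-1)\deg(h)=\deg\big({\rm div}(h^{q-1})\big)=0$.

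Two caveats that you flag yourself are genuine residual work, though of a standard kind. First, the recipe only determines the answer up to the convention $c_1/c_2$ versus $c_2/c_1$, i.e.\ it proves the identity up to replacing the pairing by its inverse; for everything this paper uses the proposition for (the unimodularity of $\kappa$ in Lemma~\ref{lemma:nondegen} and hence Theorem~\ref{theo:finite}) this ambiguity is harmless, but to get the literal stated equality you must fix the biextension normalization of the Weil pairing and match it with the geometric one, which is the comparison carried out in \cite{Gor}. Second, the base-change step is slightly more than a formality: you need the quotient model of \cite[Theor.\,3.1]{Gor} to compute the Poincar\'e biextension already over $k=\FF_q$ (here one uses that over a finite field $\Pic^0(X)$ is the full group of $\FF_q$-points of the Jacobian, since ${\Br(\FF_q)=0}$) and its compatibility with the inclusion $\Ab^*_X\subset\Ab^*_{\bar X}$, with $\Pic^0(X)_{q-1}$ sitting inside $\Pic^0(\overline X)_{q-1}$ as in Lemma~\ref{lemma:nondegen}. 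With those two points pinned down, your argument is a complete proof, and, as the paper remarks after the proposition, it adapts verbatim to an arbitrary ground field and to $n$-torsion with $n$ prime to the characteristic. Note, by contrast, that the paper's own proof of Corollary~\ref{cor:Weil} takes the proposition as a black box and works over $\bar k$ via $\Nm_{\,\Fr}$; your computation lives one level deeper and is independent of that reduction.
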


Actually, Proposition~\ref{prop:Weiltame} holds for a smooth projective curve over an arbitrary field and for $n$-torsion in the Picard group, where $n$ is prime to the characteristic of the ground field.

\begin{cor}\label{cor:Weil}
For any $\varphi\in K^*$ such that ${\rm div}(\varphi)\in (q-1)\Div(X)$ and any $g\in (\Ab^*_X)^0$, there is an equality
$$
\kappa\big([{\rm div}(\varphi)/(q-1)],[{\rm div}(g)]\big)=(\varphi,g)_X\,.
$$
\end{cor}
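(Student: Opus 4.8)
The plan is to reduce the statement to the curve $\overline X$ over $\bar k$, where $k^*=\mu_{q-1}$ sits inside $\bar k^*$ and where Proposition~\ref{prop:Weiltame} is available by the remark following it, and then to realize the class $\Fr_*(\widetilde m)-\widetilde m$ occurring in $\kappa$ as the divisor class of an idele to which that proposition applies. First I would invoke the compatibility of the tame symbol with base change: for $\varphi\in K^*$ and $g\in\Ab^*_X$ one has $(\varphi,g)_X=(\varphi,\bar g)_{\overline X}$, where $\bar g\in\Ab^*_{\overline X}$ is the image of $g$; this is seen by expanding each factor $\Nm_{k(x)/k}(\varphi,g_x)_x$ as the product of the local tame symbols at the points of $\overline X$ over $x$ (the residue fields on $\overline X$ being $\bar k$, so the outer norms are trivial). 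By Lemma~\ref{lemma:nondegen} the left-hand side is $\kappa(\ell,[{\rm div}(g)])=(\ell,\Fr_*(\widetilde m)-\widetilde m)_{q-1}$, with $\ell=[{\rm div}(\varphi)/(q-1)]$, and since this value does not depend on the chosen lift $\widetilde m$ of $[{\rm div}(g)]$ I am free to pick a convenient one.

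To produce that lift I would use that the degree-zero idele class group of $\overline X$ is divisible by $q-1$: indeed $\Pic^0(\overline X)$ is divisible and every local unit group $\widehat\OO^*_{\overline X,\widetilde x}$ is $(q-1)$-divisible because $q-1$ is prime to ${\rm char}\,k$. Hence $\bar g=\widetilde b^{\,q-1}\widetilde\psi$ for some $\widetilde b\in\Ab^*_{\overline X}$ and $\widetilde\psi\in\overline K^*$. Then $\widetilde m:=[{\rm div}(\widetilde b)]$ satisfies $(q-1)\widetilde m=[{\rm div}(g)]$, and I set $\widetilde h:=\Fr_*(\widetilde b)\,\widetilde b^{-1}$, so that ${\rm div}(\Fr_*\widetilde b)=\Fr_*{\rm div}(\widetilde b)$ gives $[{\rm div}(\widetilde h)]=\Fr_*(\widetilde m)-\widetilde m$; note this class is $(q-1)$-torsion, since $(q-1)(\Fr_*-1)\widetilde m=(\Fr_*-1)[{\rm div}(g)]=0$ as $[{\rm div}(g)]\in\Pic^0(X)={\rm Ker}(\Fr_*-1)$.

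The crux is to verify that $\widetilde h^{\,q-1}\in\overline K^*$, so that Proposition~\ref{prop:Weiltame} applies to the pair $(\varphi,\widetilde h)$ over $\bar k$. The key point, and the step I expect to be the main obstacle, is to identify the operator $\Fr_*$ correctly: although $\Fr$ is the $\bar k$-linear purely inseparable Frobenius morphism, its pushforward on $\Ab^*_{\overline X}$ is the \emph{arithmetic} Frobenius $\sigma\in{\rm Gal}(\bar k/k)$ acting on the $\bar k$-coefficients of an idele (this is consistent with $\Fr_*$ on $\Pic^0(\overline X)$ having kernel $\Pic^0(X)$, as already used in Lemma~\ref{lemma:nondegen}). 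It is easy to misidentify $\Fr_*$ here, e.g. as multiplication by $q$, which would leave a spurious unit-idele term. Since $\bar g$ and $\varphi$ are defined over $\FF_q$ they are fixed by $\sigma$; in particular $\Fr_*(\bar g)=\bar g$, and therefore $\widetilde h^{\,q-1}=\Fr_*(\widetilde b^{\,q-1})\,\widetilde b^{-(q-1)}=\Fr_*(\bar g\widetilde\psi^{-1})(\bar g\widetilde\psi^{-1})^{-1}=\widetilde\psi\,\Fr_*(\widetilde\psi)^{-1}\in\overline K^*$.

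With this established the argument closes formally. Proposition~\ref{prop:Weiltame} over $\bar k$ gives $(\ell,[{\rm div}(\widetilde h)])_{q-1}=(\varphi,\widetilde h)_{\overline X}$, and the Weil pairing here is exactly the one in $\kappa$. To evaluate the right-hand side I would use Galois-equivariance of the symbol together with $\Fr_*$-invariance of $\varphi$: since $\sigma$ raises $\bar k^*$ to the $q$-th power, $(\varphi,\Fr_*\widetilde b)_{\overline X}=\sigma\big((\varphi,\widetilde b)_{\overline X}\big)=(\varphi,\widetilde b)_{\overline X}^{\,q}$, whence $(\varphi,\widetilde h)_{\overline X}=(\varphi,\widetilde b)_{\overline X}^{\,q-1}=(\varphi,\widetilde b^{\,q-1})_{\overline X}=(\varphi,\bar g\widetilde\psi^{-1})_{\overline X}=(\varphi,\bar g)_{\overline X}$, the last equality by Weil reciprocity for $\varphi,\widetilde\psi\in\overline K^*$. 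Combining with the base-change identity yields $\kappa(\ell,[{\rm div}(g)])=(\varphi,\bar g)_{\overline X}=(\varphi,g)_X$, as claimed.
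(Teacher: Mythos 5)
Your proof is correct and is essentially the paper's own argument: the paper likewise passes to $\overline{X}$ over $\bar k$, writes $g\psi=\widetilde g^{\,q-1}$ using $(q-1)$-divisibility of $\Pic^0(\overline{X})$ and of local unit groups, forms $h=\Nm_{\,\Fr}(\widetilde g)\cdot\widetilde g^{\,-1}$, checks $h^{q-1}\in\bar k(\overline{X})^*$, applies Proposition~\ref{prop:Weiltame}, and evaluates $(\varphi,h)_{\overline{X}}$ by the same telescoping $q$-power computation. Your semilinear arithmetic Frobenius $\sigma$ acting on ideles is literally the paper's norm map $\Nm_{\,\Fr}$ (since $\Fr^*\circ\sigma$ is the $q$-th power map, $\Nm_{\,\Fr}=(\Fr^*)^{-1}\circ(\,\cdot\,)^q=\sigma$), so your Galois-equivariance step is exactly the paper's projection-formula step, and your identification of the pushforward --- the point you rightly flag as the crux --- matches the paper's commutative diagram relating $\Nm_{\,\Fr}$ and $\Fr_*$.
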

\begin{proof}
We will use the ring of adeles $\Ab^*_{\bar X}$ of the curve $\overline{X}$ over the algebraic closure $\bar k$ of~$k$. Note that there is a natural embedding of rings $\Ab_X\subset \Ab_{\bar X}$.

It follows from $(q-1)$-divisibility of the groups $\Pic^0(\overline{X})$ and $\bar k[[t]]^*$ that there are ${\widetilde{g}\in \Ab^*_{\bar X}}$ and ${\psi\in\bar k(\overline{X})^*}$ such that ${\widetilde{g}^{\,q-1}=g\psi}$. In particular, there is an equality
\begin{equation}\label{eq:1metro}
(q-1)[{\rm div}(\widetilde{g})]=[{\rm div}(g)]
\end{equation}
in $\Pic^0(\overline{X})$.

The finite morphism $\Fr\colon \overline X\to \overline X$ defines the embedding $\Ab^*_{\bar X}\hookrightarrow \Ab^*_{\bar X}$ and the norm map $\Nm_{\,\Fr}\colon \Ab^*_{\bar X}\to \Ab^*_{\bar X}$. Put
$$
h=\Nm_{\,\Fr}(\widetilde g)\cdot\widetilde g^{\,-1}\in\Ab^*_{\bar X}\,.
$$
Then we have
\begin{equation}\label{eq:2metro}
h^{q-1}=\Nm_{\,\Fr}(\widetilde g^{\,q-1})\cdot \widetilde g^{\,-(q-1)}=\Nm_{\,\Fr}(g\psi)\cdot (g\psi)^{-1}=\Nm_{\,\Fr}(\psi)\cdot\psi^{-1}\in \bar k(\overline{X})^*\,,
\end{equation}
where the third equality follows from the fact that the restriction of $\Nm_{\,\Fr}$ to the subgroup $\Ab^*_X\subset \Ab^*_{\bar X}$ is the identity.

There is a commutative diagram
$$
\begin{CD}
 \Ab^*_{\bar X} @>{\rm [div(-)]}>> \Pic(\overline{X}) \\
 @V_{\Nm_{\,\Fr}}VV @V_{\Fr_*}VV \\
 \Ab^*_{\bar X} @>{\rm [div(-)]}>> \Pic(\overline{X})\,.
 \end{CD}
$$
Hence, we have
\begin{equation}\label{eq:1.5metro}
[{\rm div}(h)]=\Fr_*[{\rm div}(\widetilde{g})]-[{\rm div}(\widetilde{g})]\,.
\end{equation}

Combining formulas~\eqref{eq:1metro} and~\eqref{eq:1.5metro}, we obtain the equality
$$
\kappa\big([{\rm div}(\varphi)/(q-1)],[{\rm div}(g)]\big)=\big([{\rm div}(\varphi)/(q-1)],[{\rm div}(h)]\big)_{q-1}\,.
$$
By Proposition~\ref{prop:Weiltame} and formula~\eqref{eq:2metro}, we get
$$
\big([{\rm div}(\varphi)/(q-1)],[{\rm div}(h)]\big)_{q-1}=(\varphi,h)_{\bar X}\,.
$$
Finally, there are equalities
$$
(\varphi,h)_{\bar X}=\big(\varphi,\Nm_{\,\Fr}(\widetilde g)\big)\cdot(\varphi,\widetilde g^{\,-1})_{\bar X}=\Nm_{\,\Fr}(\varphi,\widetilde{g})_{\bar X}\cdot (\varphi,\widetilde{g})_{\bar X}^{-1}=
$$
$$
=(\varphi,\widetilde{g})^q_{\bar X}\cdot (\varphi,\widetilde{g})^{-1}_{\bar X}=(\varphi,\widetilde{g}^{\,q-1})_{\bar X}=(\varphi,g\psi)_{\bar X}=(\varphi,g)_{\bar X}=(\varphi,g)_X\,,
$$
where the second equality follows from the projection formula for the tame symbol pairing and the third equality follows from the fact that $\Nm_{\,\Fr}$ sends an element $c\in \bar k^*$ to $c^q$.
\end{proof}

\begin{lemma}\label{lem:Ufinite}
There are equalities
$$
U_x=(K_x^*)^{q-1}\,,\qquad U=(\Ab^*_X)^{q-1}\,,
$$
where $x\in X$ is any closed point, and the global tame symbol pairing induces an isomorphism $\Ab_X^*/U\simeq \Hom_c\big(\Ab_X^*/U,k^*\big)$, where $\Hom_c$ denotes the group of continuous homomorphisms.
\end{lemma}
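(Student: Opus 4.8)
The plan is to treat the three assertions in turn: the first two by explicit local computation and the last by an adelic self-duality argument, where the real work lies.

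First I would compute $U_x$. Fixing a uniformizer $t$ at $x$ gives $\widehat{\OO}_{X,x}\cong k(x)[[t]]$ and a decomposition $K_x^*\cong t^{\Z}\times k(x)^*\times(1+\widehat{\mm}_x)$. The explicit formula for the local tame symbol shows that $1+\widehat{\mm}_x$ lies in the radical of $(\cdot,\cdot)_x$, and that for $f_x=ut^a$, $g_x=vt^b$ with $u,v$ units one has $(f_x,g_x)_x=(-1)^{ab}\bar u^{-b}\bar v^{a}$ in $k(x)^*$, where $\bar u,\bar v$ are the residues. Applying $\Nm_{k(x)/k}$ and letting $g_x$ vary, membership $f_x\in U_x$ forces first $(q-1)\mid a$ (vary the residue of $g_x$, using surjectivity of $\Nm_{k(x)/k}$ onto the cyclic group $k^*$) and then $\Nm_{k(x)/k}(\bar u)=1$. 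Since $k(x)^*$ is cyclic of order $q^{d_x}-1$ and $\Nm_{k(x)/k}(\bar u)=\bar u^{(q^{d_x}-1)/(q-1)}$, the kernel of the norm is the unique subgroup of order $(q^{d_x}-1)/(q-1)$, which is exactly $(k(x)^*)^{q-1}$. Finally, because $q-1$ is prime to the characteristic, raising to the $(q-1)$-th power is bijective on the pro-$p$ group $1+\widehat{\mm}_x$. Assembling the three factors gives $U_x=(K_x^*)^{q-1}$.

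Next, to pass from $U=\prod_x U_x$ (Remark~\ref{rmk:kerneltame}) to $U=(\Ab_X^*)^{q-1}$, the inclusion $(\Ab_X^*)^{q-1}\subset U$ is immediate since $g_x^{q-1}\in(K_x^*)^{q-1}=U_x$. Conversely, given $f=(f_x)\in U$ I would choose for each $x$ a root $g_x\in K_x^*$ with $g_x^{q-1}=f_x$; as $\nu_x(f_x)=(q-1)\,\nu_x(g_x)$, the element $g_x$ is a unit at every place where $f_x$ is, so $g=(g_x)$ is again an idele with $g^{q-1}=f$. Hence $U=(\Ab_X^*)^{q-1}$ and $\Ab_X^*/U=\Ab_X^*/(q-1)$.

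For the unimodularity I would first record the local picture. Each $K_x^*/U_x$ is finite discrete, isomorphic to $\Z/(q-1)\times\Z/(q-1)$ via the pair $\big(\nu_x\bmod(q-1),\ \Nm_{k(x)/k}(\bar u)\big)$, and the image $V_x$ of $\widehat{\OO}_{X,x}^*$ is the residue factor $\cong k^*$. The local pairing induced by $\Nm_{k(x)/k}(\cdot,\cdot)_x$ is nondegenerate on $K_x^*/U_x$ by construction (the sign $(-1)^{ab}$ does not affect nondegeneracy), and since $\#(K_x^*/U_x)=(q-1)^2=\#\Hom(K_x^*/U_x,k^*)$, nondegeneracy upgrades to a perfect pairing. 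A direct check shows $V_x$ is its own annihilator, $V_x^{\bot}=V_x$. Now $\Ab_X^*/U$ is the restricted product of the $K_x^*/U_x$ with respect to the compact open subgroups $V_x$, and the global pairing is the product of the local ones into $k^*=\mu_{q-1}$. As all groups in sight are $(q-1)$-torsion, $\Hom_c(-,k^*)$ coincides with Pontryagin duality, the dual of a restricted product is the restricted product of the duals taken with respect to the annihilators $V_x^{\bot}$, and since $V_x^{\bot}=V_x$ for every $x$ the local perfect pairings glue to the desired topological isomorphism $\Ab_X^*/U\simeq\Hom_c(\Ab_X^*/U,k^*)$.

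The routine parts are the local computations; the main obstacle is this last step, where I must handle the topology carefully: verifying that $U_x$ is open (it contains $1+\widehat{\mm}_x$ and has finite index, so $K_x^*/U_x$ is finite discrete) so that $\Ab_X^*/U$ is genuinely a restricted product, that each character $g\mapsto(f,g)_X$ is continuous (it is trivial whenever both arguments are units, hence on an open subgroup), and that the restricted-product duality really identifies $\Hom_c$ with the product of the local duals. Injectivity of the induced map is the definition of $U$; the content is surjectivity onto continuous characters, which is exactly what gluing the Lagrangians $V_x=V_x^{\bot}$ provides.
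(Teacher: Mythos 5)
Your proposal is correct. The first two equalities are established exactly as in the paper: the paper likewise derives $U_x=(K_x^*)^{q-1}$ from the explicit local formula together with the $(q-1)$-divisibility of $1+\widehat{\mm}_x$ and the fact that $\Nm_{k(x)/k}$ induces an isomorphism $k(x)^*/(k(x)^*)^{q-1}\simeq k^*$, and it gets $U=(\Ab_X^*)^{q-1}$ from Remark~\ref{rmk:kerneltame}; your observation that a $(q-1)$-th root $g=(g_x)$ of an idele is again an idele because $\nu_x(g_x)=\nu_x(f_x)/(q-1)$ is precisely the small point the paper leaves implicit. For the self-duality your route is genuinely different in organization, though it rests on the same structural fact. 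The paper argues globally: it exhibits the exact sequence $1\to\prod_{x} k^*\to\Ab_X^*/(\Ab_X^*)^{q-1}\to\Div(X)/(q-1)\to 0$ and shows that the tame symbol pairing places the isotropic subgroup $\prod_x k^*$ and the discrete quotient $\Div(X)/(q-1)$ in duality with one another (with $\Hom_c$ on the profinite side), leaving the remaining d\'evissage — injectivity is the definition of $U$, surjectivity comes from lifting a character's restriction to the subgroup — to the reader. You instead prove full self-duality place by place, showing each $K_x^*/U_x\simeq(\Z/(q-1))^2$ is perfectly self-paired with the image $V_x$ of $\widehat{\OO}_{X,x}^*$ satisfying $V_x^{\bot}=V_x$, and then invoke Pontryagin duality for restricted products. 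Both arguments hinge on the image of the unit ideles being a self-orthogonal compact open subgroup dual to the divisor quotient; your local-first version makes the topological bookkeeping explicit and self-contained (openness of $U_x$, continuity of the characters, identification of $\Hom_c$ with the Pontryagin dual on $(q-1)$-torsion groups), whereas the paper's global exact sequence has the advantage of feeding directly into the machinery of Corollary~\ref{cor:key} used in the proof of Theorem~\ref{theo:finite}, where the same subgroup and quotient reappear as $C$ and $A/C$.
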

\begin{proof}
The formula for $U_x$ follows from the explicit description of the local tame symbol pairing and from the facts for any finite extension of fields $k\subset l$, the multiplicative group $1+tl[[t]]$ is $(q-1)$-divisible and the norm map $\Nm_{l/k}$ gives an isomorphism ${l^*/(l^*)^{q-1}\simeq k^*}$. The formula for $U$ follows from Remark~\ref{rmk:kerneltame}.

We see that there is an exact sequence
$$
1\longrightarrow \mbox{$\prod\limits_{x\in X}k^*$}\longrightarrow \Ab^*_X/(\Ab^*_X)^{q-1}\longrightarrow \Div(X)/(q-1)\longrightarrow 0
$$
and the tame symbol pairing induces isomorphisms
$$
\mbox{$\prod\limits_{x\in X}k^*$}\stackrel{\sim}\longrightarrow \Hom\big(\Div(X)/(q-1),k^*\big)\,,\qquad
\Div(X)/(q-1)\stackrel{\sim}\longrightarrow \Hom_c\big(\mbox{$\prod\limits_{x\in X}k^*$},k^*\big)\,.
$$
This proves the lemma.
\end{proof}

\begin{theo}\label{theo:finite}
Assume that $k=\FF_q$ is a finite field. Then there is an equality $K^{*\,\bot}=K^*\cdot U$.
\end{theo}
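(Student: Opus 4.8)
The plan is to deduce the equality $K^{*\,\bot}=K^*\cdot U$ from Corollary~\ref{cor:key}, applied to the groups $A=\Ab_X^*/U$, $B=K^*/(K^*\cap U)$, $C$, $N=k^*$ fixed before Remark~\ref{rmk:kerneltame}. Unwinding the definitions, $B^{\bot}=K^{*\,\bot}/U$ and $B=(K^*\cdot U)/U$ inside $A$ (using $U\subset K^{*\,\bot}$), so the assertion $B=B^{\bot}$ of Corollary~\ref{cor:key} is exactly the theorem. Condition~(i) of that corollary, that $\alpha$ is an isomorphism, is furnished directly by the first isomorphism $\prod_{x}k^*\simeq\Hom\big(\Div(X)/(q-1),k^*\big)$ of Lemma~\ref{lem:Ufinite}. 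Thus everything reduces to verifying condition~(ii): that the induced pairing $(B\cap C)\times A/(B+C)\to k^*$ is unimodular.

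The next step is to identify all the groups explicitly. By Lemma~\ref{lem:Ufinite} we have $U=(\Ab_X^*)^{q-1}$, and Corollary~\ref{cor:AT}(i) gives $K^*\cap U=(K^*)^{q-1}$, so $B=K^*/(K^*)^{q-1}$. From the exact sequence of Lemma~\ref{lem:Ufinite} one reads off $C=\prod_x k^*$ and $A/C=\Div(X)/(q-1)$; passing to the further quotient by the image of $K^*$ yields $A/(B+C)=\Pic(X)/(q-1)$. Since $d=1$ by Corollary~\ref{cor:AT}(ii), the degree map $\Pic(X)\to\Z$ is surjective and splits, giving a short exact sequence $0\to \Pic^0(X)/(q-1)\to \Pic(X)/(q-1)\xrightarrow{\deg}\Z/(q-1)\to 0$. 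For $B\cap C$ I would note that a class in $B$ lies in $C$ precisely when its divisor lies in $(q-1)\Div(X)$, so $B\cap C$ is the image of $K^*_{(q-1)}=\{\varphi\in K^*:{\rm div}(\varphi)\in(q-1)\Div(X)\}$ in $B$. The assignment $\varphi\mapsto[{\rm div}(\varphi)/(q-1)]$ then produces an exact sequence $1\to k^*\to B\cap C\to \Pic^0(X)_{q-1}\to 0$, where surjectivity amounts to lifting a torsion class and the kernel is $k^*\cdot(K^*)^{q-1}$; here $k^*\cap(K^*)^{q-1}=1$ because $k$ is algebraically closed in $K$.

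The main obstacle is the unimodularity in condition~(ii), which I would establish by a filtration argument matching the two short exact sequences above. First I would check the off-diagonal vanishing: a constant $c\in k^*\subset B\cap C$ satisfies $(c,g)_X=c^{-\deg(g)}$, which is trivial when $[{\rm div}(g)]\in\Pic^0(X)/(q-1)$, so $k^*$ annihilates $\Pic^0(X)/(q-1)$. Hence the pairing descends to a pairing $k^*\times \Z/(q-1)\to k^*$, $(c,n)\mapsto c^{-n}$, which is perfect since $k^*$ is cyclic of order $q-1$, together with a pairing $\Pic^0(X)_{q-1}\times \Pic^0(X)/(q-1)\to k^*$ on the quotient. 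The crucial point is that this latter pairing coincides with the Weil pairing $\kappa$ of Lemma~\ref{lemma:nondegen}: for $\varphi\in K^*_{(q-1)}$ and $g\in(\Ab_X^*)^0$ this is exactly the content of Corollary~\ref{cor:Weil}, and $\kappa$ is unimodular by Lemma~\ref{lemma:nondegen}. The deepest inputs are therefore the tame-symbol/Weil-pairing identity of Corollary~\ref{cor:Weil} and the non-degeneracy of $\kappa$.

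To conclude I would combine the graded pieces. All groups in sight are finite and killed by $q-1$, so $\Hom(-,k^*)$ is exact on them (as $k^*\simeq\Z/(q-1)$ is self-injective), and applying it to the filtration on $A/(B+C)$ produces a short exact sequence into which the filtration $k^*\subset B\cap C$ maps, the square commuting precisely by the off-diagonal vanishing just checked. Since the two outer vertical maps are the unimodular pairings $k^*\times\Z/(q-1)\to k^*$ and $\kappa$, the five lemma shows that $B\cap C\to\Hom\big(A/(B+C),k^*\big)$ is an isomorphism; as both groups are finite of equal order, non-degeneracy then forces the dual map to be an isomorphism as well, so the pairing is unimodular. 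With conditions~(i) and~(ii) verified, Corollary~\ref{cor:key} yields $B=B^{\bot}$, that is, $K^{*\,\bot}=K^*\cdot U$.
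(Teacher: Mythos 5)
Your proposal is correct and follows essentially the same route as the paper: applying Corollary~\ref{cor:key} with the same identifications $B\cap C=F/(K^*)^{q-1}$ and $A/(B+C)=\Pic(X)/(q-1)$, the same two exact sequences, and the same key inputs (Corollary~\ref{cor:AT}, Lemma~\ref{lem:Ufinite}, Corollary~\ref{cor:Weil}, Lemma~\ref{lemma:nondegen}). The only difference is that you spell out the d\'evissage (off-diagonal vanishing plus the five-lemma and equal-order argument) by which unimodularity on the graded pieces yields condition~(ii), a step the paper leaves implicit.
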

\begin{proof}
It follow from Lemma~\ref{lem:Ufinite} and its proof that there are equalities
$$
C=\mbox{$\prod\limits_{x\in X}k^*$}\,,\qquad A/C=\Div(X)/(q-1)
$$
and the map $\alpha$ coincides with the natural isomorphism
$$
\mbox{$\prod\limits_{x\in X}k^*$}\stackrel{\sim}\longrightarrow \Hom\big(\Div(X)/(q-1),k^*\big)\,.
$$
Let $F\subset K^*$ be the subgroup that consists of all $\varphi\in K^*$ such that ${{\rm div}(\varphi)\in(q-1)\Div(X)}$. Clearly, there are embeddings $k^*,(K^*)^{q-1}\subset F$. It follows from Corollary~\ref{cor:AT}(i) and Lemma~\ref{lem:Ufinite} that there is an equality ${B\cap C=F/(K^*)^{q-1}}$. Also, there is an exact sequence
\begin{equation}\label{eq:F}
1\longrightarrow k^*\longrightarrow F/(K^*)^{q-1}\longrightarrow \Pic^0(X)_{q-1}\longrightarrow 0\,,
\end{equation}
where the second map sends the class $[\varphi]$ of $\varphi\in F$ to $[{\rm div}(\varphi)/(q-1)]$.

On the other hand, we have $A/(B+C)=\Pic(X)/(q-1)$ and by Corollary~\ref{cor:AT}(ii), there is an exact sequence
\begin{equation}\label{eq:Pic}
0\longrightarrow\Pic^0(X)/(q-1)\longrightarrow\Pic(X)/(q-1)\stackrel{\deg}\longrightarrow \Z/(q-1)\longrightarrow 0\,.
\end{equation}
The global tame symbol pairing induces naturally the pairing between the groups ${B\cap C=F/(K^*)^{q-1}}$ and $A/(B+C)=\Pic(X)/(q-1)$. Using exact sequences~\eqref{eq:F},~\eqref{eq:Pic} and Corollary~\ref{cor:Weil}, we see that this induces pairings
$$
k^*\times \Z/(q-1)\longrightarrow k^*\,,\qquad (c,[n])\longmapsto c^n\,,
$$
$$
\kappa\;:\;\Pic^0(X)_{q-1}\times \Pic(X)/(q-1)\longrightarrow k^*\,.
$$
The first pairing is obviously unimodular, while the second one is unimodular by Lemma~\ref{lemma:nondegen}. It follows that condition~(ii) of Corollary~\ref{cor:key} is satisfied, which finishes the proof.
\end{proof}

Let ${C_X=\Ab_X^*/K^*}$ be the idele class group of $X$ and let $G_K^{\rm ab}$ be the universal abelian quotient of the Galois group $G_K$ of the field~$K$.

\begin{cor}\label{cor:CFT}
The global tame symbol pairing
$$
(\cdot,\cdot)_X\;:\;K^*\times C_X\longrightarrow k^*
$$
induces an isomorphism of topological groups
$$
C_X/(q-1)\stackrel{\sim}\longrightarrow \Hom\big(K^*/(K^*)^{q-1},k^*\big)\,,
$$
which, followed by the isomorphism from the Kummer theory, provides an isomorphism of topological groups
$$
C_X/(q-1)\stackrel{\sim}\longrightarrow G_K^{\rm ab}/(q-1)\,.
$$
\end{cor}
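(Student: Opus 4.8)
The plan is to deduce both isomorphisms formally from the two structural facts already in hand: the self-duality of $A=\Ab_X^*/U$ provided by Lemma~\ref{lem:Ufinite}, and the equality $B=B^{\bot}$ provided by Theorem~\ref{theo:finite}, where $B\subset A$ denotes the image of $K^*$. Since $k^*=\mu_{q-1}$ is annihilated by $q-1$ and $K^*$ is isotropic, the global tame symbol pairing descends to a bilinear map $K^*/(K^*)^{q-1}\times C_X/(q-1)\to k^*$; here I use $U=(\Ab_X^*)^{q-1}$ together with the equality $K^*\cap U=(K^*)^{q-1}$ from Corollary~\ref{cor:AT}(i) to identify the left factor with $B$ and $C_X/(q-1)=\Ab_X^*/(K^*\cdot U)$ with $A/B$. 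This yields the map $C_X/(q-1)\to\Hom\big(K^*/(K^*)^{q-1},k^*\big)$ of the statement, $[g]\mapsto\big(\varphi\mapsto(\varphi,g)_X\big)$, whose kernel is $K^{*\bot}/(K^*\cdot U)$ and hence vanishes by Theorem~\ref{theo:finite}; thus injectivity is immediate.

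For surjectivity and the topological statement I would run Pontryagin duality for the locally compact abelian group $A$. I would first note that $U$ is closed in $\Ab_X^*$ (cut out by the closed conditions $f_x\in(K_x^*)^{q-1}$), so $A$ is Hausdorff, and that $A$ is annihilated by $q-1$, so that $\Hom_c(A,k^*)=\Hom_c(A,\mathbb{T})$ is the genuine Pontryagin dual; Lemma~\ref{lem:Ufinite} then says exactly that the tame symbol pairing identifies $A$ with its own dual. For the closed subgroup $B$ this gives $\Hom_c(B,k^*)\simeq A/B^{\bot}$, and $B=B^{\bot}$ from Theorem~\ref{theo:finite} turns this into $\Hom_c(B,k^*)\simeq A/B=C_X/(q-1)$, inverse to the map above. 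The one point demanding care, and the main technical obstacle, is to identify $\Hom_c$ with the plain $\Hom$ of the statement, i.e.\ to show $B$ is discrete. For this I would use that the image $C=\prod_{x\in X}k^*$ of $\prod_{x\in X}\widehat{\OO}_{X,x}^*$ is a compact open subgroup of $A$ and that $B\cap C=F/(K^*)^{q-1}$ is finite, being an extension of $\Pic^0(X)_{q-1}$ by $k^*$ via the exact sequence~\eqref{eq:F}. A subgroup of a Hausdorff group meeting some open subgroup in a finite set is discrete, so $B$ is discrete and $\Hom_c(B,k^*)=\Hom\big(K^*/(K^*)^{q-1},k^*\big)$; the right-hand side is profinite, giving the first isomorphism as topological groups.

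For the second isomorphism I would invoke Kummer theory, available because $\mu_{q-1}=k^*\subset K^*$ and $q-1$ is prime to $\mathrm{char}(K)$. It furnishes a natural isomorphism $K^*/(K^*)^{q-1}\simeq\Hom_c(G_K,\mu_{q-1})$, and since every continuous homomorphism $G_K\to\mu_{q-1}$ factors through the maximal abelian quotient of exponent dividing $q-1$, the target equals $\Hom_c\big(G_K^{\rm ab}/(q-1),\mu_{q-1}\big)$. Applying $\Hom(-,k^*)$ and using Pontryagin reflexivity of the profinite $(q-1)$-torsion group $G_K^{\rm ab}/(q-1)$ gives $\Hom\big(K^*/(K^*)^{q-1},k^*\big)\simeq G_K^{\rm ab}/(q-1)$. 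Composing with the first isomorphism produces the asserted isomorphism $C_X/(q-1)\simeq G_K^{\rm ab}/(q-1)$ of topological groups. Beyond the discreteness of $B$, I expect only routine checks that all these identifications are compatible with the topologies.
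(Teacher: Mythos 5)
Your proposal is correct and takes essentially the same route as the paper: both rest on the self-duality of $\Ab_X^*/U$ from Lemma~\ref{lem:Ufinite}, the equality $K^{*\,\bot}=K^*\cdot U$ from Theorem~\ref{theo:finite}, and $K^*\cap U=(K^*)^{q-1}$ from Corollary~\ref{cor:AT}(i), combined with Pontryagin duality for $(q-1)$-torsion groups and Kummer theory. The only difference is expository: the paper first identifies $\Hom_c\big(C_X/(q-1),k^*\big)\simeq K^{*\,\bot}/(K^{*\,\bot}\cap U)=K^*/(K^*)^{q-1}$ and then dualizes, whereas you run the duality in the opposite direction and spell out the topological details (closedness of $U$, discreteness of the image of $K^*$ via its finite intersection with the compact open subgroup $\prod_{x\in X}k^*$) that the paper leaves implicit.
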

\begin{proof}
By Lemma~\ref{lem:Ufinite}, the global tame symbol pairing induces an isomorphism ${K^{*\,\bot}/(K^{*\,\bot}\cap U)\simeq \Hom_c\big(C_X/(q-1),k^*\big)}$. By Theorem~\ref{theo:finite} and Corollary~\ref{cor:AT}(i), we have ${K^{*\,\bot}/(K^{*\,\bot}\cap U)=K^*/(K^*)^{q-1}}$. Since $k^*$ is a cyclic group of order $q-1$, we conclude that the $(q-1)$-torsion groups $K^*/(K^*)^{q-1}$ and $C_X/(q-1)$ are Pontryagin dual with respect to the global tame symbol pairing.
\end{proof}

\end{document}